\newtheorem{tw}{Theorem}[section]
\newtheorem{lm}[tw]{Lemma}
\newtheorem{wn}[tw]{Corollary}
\newtheorem{pr}[tw]{Proposition}
\theoremstyle{definition}
\newtheorem{df}{Definition}[section]
\newtheorem{uw}[tw]{Remark}
\newcommand{\R}{\mathbb{R}}
\newcommand{\Z}{\mathbb{Z}}
\newcommand{\N}{\mathbb{N}}
\newcommand{\T}{\mathbb{T}}
\newcommand{\cS}{\mathcal{S}}
\newcommand{\Q}{\mathbb{Q}}
\newcommand{\cA}{\mathcal{A}}
\newcommand{\cP}{\mathcal{P}}
\newcommand{\cT}{\mathcal{T}}
\newcommand{\vep}{\varepsilon}
\providecommand{\noopsort}[1]{} 
\title{Ratner's property and mild mixing for smooth flows on surfaces}
\author{Adam Kanigowski \and Joanna Ku\l{}aga-Przymus\thanks{Research supported by Narodowe Centrum Nauki grant DEC-2011/03/B/ST1/00407.}}
\begin{document}

\bibliographystyle{siam}
\maketitle

\begin{abstract}
Let $\cT=(T_t^f)_{t\in\R}$ be a special flow built over an IET $T:\T\to\T$ of bounded type, under a roof function $f$ with symmetric logarithmic singularities at a subset of discontinuities of $T$. We show that $\cT$ satisfies so-called switchable Ratner's property which was introduced in~\cite{FKa}. A~consequence of this fact is that such flows are mildly mixing (before, they were only known to be weakly mixing~\cite{MR2481331} and not mixing~\cite{MR2800723}). Thus, on each compact, connected, orientable surface of genus greater than one there exist flows which are mildly mixing and not mixing. 
\end{abstract}

\tableofcontents

\section{Introduction}
\subsection{Motivation}
This paper is concerned with measurable probability-preserving flows on standard Borel spaces. There are two notions which are of central interest. The first of them is so-called Ratner's property (originally, H-property~\cite{MR717825}, later named $H_p$-property or Ratner's property, see~\cite{MR1325699}), which describes a certain way of a divergence of orbits of nearby points, see Section~\ref{se:1.1}. We study this property for the first time in the literature in the class of special flows over interval exchange transformations. More specifically, we deal with so-called interval exchange transformations of bounded type. The roof function has so-called logarithmic singularities.

The special flows under consideration have a very natural origin. Namely, they arise when taking a cross section of smooth flows on compact, connected orientable surfaces. Such flows were studied already in the 1980s~\cite{MR676612}. However, even the very basic questions about properties such as weak mixing and mixing remained unanswered until very recently. In~\cite{MR2481331} a generic flow from this class was shown to be weakly mixing, whereas in~\cite{MR2800723} the absence of mixing was proved. Our motivation, apart from studying Ratner's property itself, was to discuss the question whether such flows enjoy the mild mixing property, intermediate between weak mixing and mixing. We give a positive answer to this question, provided that the base interval exchange transformation in the special flow representation is of bounded type. The key tool in our proof, which is also of independent interest, is Ratner's property.

\subsection{Ratner's property and its consequences}\label{se:1.1}
Ratner's property was first observed by M.~Ratner~\cite{MR717825} for the class of horocycle flows on the unit tangent bundles of compact surfaces of constant negative curvature.  In fact, instead of~\emph{the Ratner's property}, we should rather speak of \emph{Ratner's properties}, as this notion was later modified by various authors. Already in~\cite{MR796188}, D.~Witte extended the $H_p$-property to the so-called \emph{compact Ratner's property} and used it for studying the conjugacy problem for unipotent flows. The first examples of flows which were not of algebraic origin and were satisfying a slightly weakened Ratner's property, appeared in the literature several years later. K. Fr\k{a}czek and M. Lema\'{n}czyk~\cite{MR2237466} showed that so-called \emph{finite Ratner's property} holds for special flows over irrational rotations by $\alpha$, under piecewise absolutely continuous roof functions $f$ which satisfy von Neumann's condition $\int_{\mathbb{T}} f' \ d\lambda\neq 0$ (see~\cite{MR1503078}), whenever $\alpha$ has bounded partial quotients.\footnote{Notice that such flows are indeed different from the horocycle flows, as they are never mixing~\cite{MR0306629}, whereas that the horocycle flows are mixing of all orders, see~\cite{MR2186251}. In fact, such special flows are even spectrally disjoint with all mixing flows~\cite{FL04}.} Further examples include some special flows over rotations under piecewise constant roof functions~\cite{MR2342268}, special flows over two-dimensional rotations~\cite{MR2753947} (here the so called \emph{weak Ratner's property} has been introduced -- it is a weaker notion than the finite Ratner's property) and special flows over irrational rotations under some roof functions which are of bounded variation and are more general than piecewise absolutely continuous~\cite{Kanigowski:2013fk,Kanigowski:2013uq}. The most recent results of this flavor concern so-called Kochergin-type and Arnol'd-type (or Sinai-Khanin-type) flows (see~\cite{MR0516507}, ~\cite{MR1142204} and~\cite{MR1189019}) -- in~\cite{FKa} it is shown that such flows do not satisfy weak Ratner's property, but they do satisfy its further weakening, so-called \emph{switchable Ratner's property}, see Section~\ref{se:swr}.\footnote{In all of these classes, except for the Sinai-Khanin-type flows in~\cite{FKa}, an additional assumption that the rotation angle $\alpha$ (or both coordinates of the rotation angle in case of two-dimensional rotation) has bounded partial quotients, was made in order to prove that one of Ratner's properties holds.} This variant of Ratner's property is the one we deal with in this paper.

Given a flow, whether or not Ratner's property holds, is of an independent interest as, for example, finite Ratner's property is an isomorphism invariant. However, not less important are the strong dynamical consequences which can be derived from Ratner's properties. Namely, all mentioned Ratner's properties, except for the one considered by D. Witte, are designed in such a way that they imply certain rigidity of joinings (for the definition of joining, see Section~\ref{se:joi}). More precisely, a flow which is weakly mixing and additionally satisfies Ratner's property ($H_p$-property, finite or weak Ratner's property), automatically also enjoys the so-called~\emph{finite extension joinings property} (FEJ-property), i.e.\ for every ergodic flow $\cS$ acting on $(Y,\mathcal{C},\nu)$ and every ergodic joining $\rho\neq \mu\otimes \nu$ of $\cT$ and $\cS$, the flow $\cT\times \cS$ on $(X\times Y,\mathcal{B}\otimes \mathcal{C},\rho)$ is a finite extension of $\cS$ (this has been proved for various version of Ratner's property in the aforementioned works). Ratner's properties also imply the so-called \emph{pairwise independence property}: any pairwise independent self-joining of $\cT$ is automatically independent, i.e.\ it is the product measure.

Again, although FEJ-property being an isomorphism invariant, can be of interest in itself, its importance is also reflected in how it affects mixing properties of the flows under consideration. The central mixing property in this paper is~\emph{mild mixing}, first defined (for probability-preserving automorphisms) by H.~Furstenberg and B. Weiss~\cite{MR518553}. Recall that an automorphism $T$ of a standard probability Borel space $(X,\mathcal{B},\mu)$ is said to be mildly mixing if its Cartesian product with any ergodic conservative (finite or infinite measure preserving) automorphism remains ergodic.  Mild mixing is an intermediate property between weak mixing and mixing,  equivalent to absence of rigid factors~\cite{MR518553}, i.e.\ for no set $A\in\mathcal{B}$ with $\mu(A)\in(0,1)$ and no sequence $n_k\to\infty$ we have $\mu(T^{n_k}A\cap A)\to\mu(A)$. Similar definition is used and similar results hold also for flows. There is the following close relation between FEJ-property and mild mixing:
\begin{pr}[\cite{MR2237466}]\label{cons}
Let $\cT$ be a weakly mixing flow, which is not partially rigid. If $\cT$ additionally enjoys FEJ-property, then $\cT$ is mildly mixing.  
\end{pr}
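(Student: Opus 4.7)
The plan is a proof by contradiction. Assume $\cT$ is weakly mixing, is not partially rigid, enjoys the FEJ-property, yet fails to be mildly mixing. Since mild mixing is equivalent to absence of rigid factors, its failure produces a measurable set $A$ with $0<\mu(A)<1$ and a time sequence $t_n\to\infty$ such that $\mu(T_{t_n}A\cap A)\to\mu(A)$. I would transport this into the language of self-joinings by considering the off-diagonal measures
\[\mu_n:=(\mathrm{Id}\times T_{t_n})_*\mu,\]
each of which is a $\cT$-self-joining, and extract a subsequential weak limit $\rho$ using weak compactness of the space of joinings.

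Testing $\rho$ on the rectangle $A\times A$ yields $\rho(A\times A)=\mu(A)\neq\mu(A)^2$, so $\rho\neq\mu\otimes\mu$, and consequently at least one ergodic component $\rho'$ of $\rho$ is also not the product measure. Applying the FEJ hypothesis with $\cS=\cT$ to the ergodic self-joining $\rho'$, the system $(\cT\times\cT,\rho')$ is a finite extension of $(\cT,\mu)$ via the second coordinate projection. Invoking the standard structure theory of ergodic finite-extension self-joinings of a weakly mixing flow, $\rho'$ is representable as a finite convex combination of graph joinings
\[\rho'=\frac{1}{k}\sum_{i=1}^{k}(\mathrm{Id}\times S_i)_*\mu,\qquad S_1,\dots,S_k\in C(\cT),\]
where $C(\cT)$ is the centralizer of $\cT$ in $\mathrm{Aut}(X,\mathcal{B},\mu)$.

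The remaining, and most delicate, step is to convert this representation into partial rigidity of $\cT$, which will give the desired contradiction. The key observation is that each $\mu_n$ is itself a graph joining of the flow element $T_{t_n}\in C(\cT)$, so weak convergence $\mu_n\to\rho$ ought to pin the sequence $(T_{t_n})$ asymptotically to the finite set $\{S_1,\dots,S_k\}$ inside $C(\cT)$. A pigeonhole argument on the labels $\{1,\dots,k\}$ then extracts a further subsequence $(t_{n_j})$ and an index $i_0$ along which $T_{t_{n_j}}\to S_{i_0}$ in the weak operator topology, whence the difference sequence satisfies $T_{t_{n_j}-t_{n_l}}\to\mathrm{Id}$ as $j,l\to\infty$; this is a (partial) rigidity sequence for $\cT$, contradicting the hypothesis. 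The main obstacle is justifying the passage from weak convergence of the joining measures to the convergence $T_{t_{n_j}}\to S_{i_0}$ in $C(\cT)$ -- the natural tool is the extremality of graph joinings among self-joinings with prescribed marginals, combined with the rigidity contribution coming from the factor that forced $\rho\neq\mu\otimes\mu$ in the first place.
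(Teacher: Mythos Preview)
The paper does not prove this proposition; it is quoted from Fr\k{a}czek--Lema\'nczyk~\cite{MR2237466}, so there is no in-paper argument to compare against. Your overall strategy---contradiction via a weak limit of off-diagonal self-joinings and FEJ---is the right one, but there is a genuine gap at the structural step, prior to the obstacle you yourself flag at the end. You assert that an \emph{ergodic} finite-extension self-joining $\rho'$ decomposes as $\tfrac1k\sum_{i}(\mathrm{Id}\times S_i)_*\mu$ with $S_i\in C(\cT)$. This cannot hold as written: each graph joining is itself ergodic, so a non-trivial convex combination of them is never ergodic; reading the claim as $k=1$ amounts to asserting that $\cT$ is $2$-fold simple, which is strictly stronger than FEJ and does not follow from it (the relatively independent self-joining over a non-normal finite-index factor is an ergodic finite-extension self-joining which is not a graph). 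With this step gone, the subsequent pigeonhole extraction of $T_{t_{n_j}}\to S_{i_0}$ has no basis, and in any case weak convergence of the graph joinings $\mu_{T_{t_n}}$ to a genuine mixture $\sum_i a_i\mu_{S_i}$ does not force $T_{t_n}$ to accumulate in the weak operator topology on any single $S_i$.

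A route that does close the argument exploits a different consequence of FEJ: applying FEJ to the graph joining of any factor map shows that every non-trivial factor of $\cT$ is finite-to-one. In particular the rigid factor $\mathcal{A}$ has finite index in $\cT$, so the relatively independent self-joining $\mu\times_{\mathcal{A}}\mu$ is a finite extension of $\cT$ and has only finitely many ergodic components; these are precisely the ergodic self-joinings lying over $\Delta_{\mathcal{A}}$, and $\Delta$ is one of them. Your limit $\rho$ lies over $\Delta_{\mathcal{A}}$ and is therefore a finite convex combination of these components. From here one manufactures partial rigidity by looking at limits of $\mu_{T_{t_n-t_m}}$ rather than by isolating a single centralizer element: the finiteness of the decomposition, together with the fact that for each finite-extension component $\sigma$ the composition $\check\sigma\circ\sigma$ (with $\check\sigma$ the coordinate-flip of $\sigma$) dominates a positive multiple of $\Delta$, forces the diagonal to appear with positive weight in such a limit.
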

An immediate consequence of the above discussion is that whenever $\cT$ is weakly mixing, not partially rigid and satisfies Ratner's property then it is mildly mixing. In~\cite{MR2237466,MR2753947,MR2342268,Kanigowski:2013fk,Kanigowski:2013uq} to prove mild mixing was in fact one of the main motivations for considering Ratner's property. An exception to this ``rule'' is~\cite{FKa} where the considered flows are already mixing and Ratner's property is used to show that they are mixing of all orders. Thus, in general, Ratner's property can be seen as a useful tool to ``enhance'' mixing properties of studied flows.

\subsection{Main result and its consequences}\label{se:1.2}
We deal with special flows over minimal interval exchange transformations (IETs). The roof function is of continuity class $C^2$, except for a finite number of points, all of them being some discontinuity points of the base IET. The singularities are of symmetric logarithmic type. For more details, see Section~\ref{se:dach}. Moreover, we make an additional assumption that the base IET is of bounded type, see Section~\ref{se:ba}. Our main result is the following:
\begin{tw}\label{main}
Let $\cT=(T_t^f)_{t\in\R}$ be a special flow built over an IET $T:\T\to\T$ of bounded type, under a roof function $f$ with the properties described above. Then $\cT$ satisfies switchable Ratner's property.
\end{tw}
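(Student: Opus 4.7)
The plan is to analyze how nearby orbits of the special flow diverge and to show that, after discarding a small measure set, the divergence is governed by Birkhoff sums of $f'$ whose behavior is essentially controlled by the symmetry of the logarithmic singularities combined with the bounded-type hypothesis on $T$. The switchable Ratner property then follows by choosing, for each pair of nearby points, either the forward or the backward direction along which the divergence is well-behaved.

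First, I would set up the fundamental estimate for special flows. Two points $(x,s)$ and $(x+\xi,s)$ in the phase space, after time $t \ge 0$, are mapped to $(T^N x, r)$ and $(T^N (x+\xi), r')$ for a common $N$ (if no Rokhlin tower boundary is crossed differently) and the vertical displacement is given, up to a controlled error, by $S_N(f)(x+\xi) - S_N(f)(x) \approx \xi \cdot S_N(f')(x)$. The Ratner-type divergence requires this quantity to stay close to an element of a fixed finite set $P$ of shifts over a long window of $N$. The key observation is that for a \emph{symmetric} logarithmic singularity at $a$, one has $f'(x) \sim -A/(x-a)$ and the contributions from visits to the two sides of $a$ cancel to leading order; this is exactly what prevents the mixing that occurs in the asymmetric (Arnold-type) case.

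Next I would exploit the bounded-type hypothesis. For IETs of bounded type the Rauzy--Veech renormalization gives a uniform distortion estimate and, in particular, a Denjoy--Koksma-type inequality along the sequence of renormalization times $q_n$. Combined with the symmetry of the singularity, this yields, on a large-measure set $X_\vep$, a uniform bound $|S_N(f')(x)| = O(\log N)$ (or similarly mild growth) for $N$ in a range of scales determined by $x$. The main technical step consists in constructing $X_\vep$ carefully so that for every $x \in X_\vep$ the orbit $\{T^k x\}_{0\le k\le N}$ avoids $\vep$-neighborhoods of the singularities except for the expected number of close returns, and so that the near-singularity visits of $x$ and $x+\xi$ are almost in one-to-one correspondence; this is where the ``switch'' appears, since one may need to reverse time to pair up the anomalous contributions when $\xi$ has the ``wrong'' sign relative to the local structure at the singularity.

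With these ingredients, the verification of the switchable Ratner property reduces to the following scheme: for a given $\vep$, fix a window $[N, \kappa^{-1}N]$ of renormalization scales, take $\xi$ of order $1/N$, and use the controlled estimate on $S_N(f')$ to show that for at least a $(1-\vep)$-fraction of times in this window, the vertical displacement between the two orbits lies within $\vep$ of a fixed constant $p$ belonging to a finite set $P$ (built from the singularity coefficients $A$ and the flow direction). The flexibility of choosing forward or backward direction absorbs the cases where a singularity is approached in the bad orientation. I expect the main obstacle to be the passage from rotations to IETs in the Denjoy--Koksma estimates: unlike in the rotation case used in the earlier Ratner-type arguments, for IETs of bounded type one must combine Rauzy--Veech combinatorics with a careful analysis of how the orbits of $x$ and $x+\xi$ synchronize their visits to discontinuities, and it is here that the symmetry assumption on $f$ and the bounded-type assumption on $T$ must be used jointly to prevent the asymmetric ``drift'' that would otherwise destroy the Ratner-type divergence.
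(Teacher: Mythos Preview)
Your outline misidentifies the mechanism that produces the drift $p$. You claim that symmetry yields $|S_N(f')(x)| = O(\log N)$ on a large set, with $p$ emerging from accumulated, nearly-cancelling contributions of many visits. This is not what happens, and in fact a sublinear bound would kill the argument: with $\|x-y\|=\eta$ and $N\sim 1/\eta$, a bound $|S_N(f')|=o(N)$ forces the displacement $\eta\cdot S_N(f')\to 0$, so no nonzero $p$ can appear. Ulcigrai's cancellation estimate (Proposition~\ref{canc}) gives only $\bigl|f'^{(r)}(z)+\sum_\alpha C_\alpha^+/z_\alpha^\ell-\sum_\alpha C_\alpha^-/z_\alpha^r\bigr|\le M'r$, i.e.\ \emph{linear} growth once the closest-visit terms are accounted for; for bounded-type IETs those terms are themselves of order $r$.

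In the paper the drift is produced by a \emph{single} engineered close visit. One first shows (Lemma~\ref{dis}) that the orbit of $x$ can come within $2\eta$ of some $\ell_\alpha$ at most once in the window $[-\tfrac{1}{16\eta c},\tfrac{1}{16\eta c}]$; this is the source of the switch, since in at least one time direction the orbit stays $\ge 2\eta$ away from all discontinuities. Working in that direction, Lemma~\ref{ek} locates $k\in[N/\kappa,\,1/(32\eta c)]$ with $T^kx\in(2\eta,400\eta c^4)$, and then the single summand $|f(T^kx)-f(T^ky)|\approx \eta\cdot|u_0(\theta)|\asymp 1$ lands in the fixed compact set $P$ (a union of two intervals, not a finite set of coefficient values). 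The linear estimate from Proposition~\ref{canc} is used not to produce the drift but to \emph{cap} it: first to bound $|f^{(k+1)}(x)-f^{(k+1)}(y)|\le H/2$, and then to show $|f^{(n)}(T^Mx)-f^{(n)}(T^My)|<\vep$ for $n\in[0,\kappa M]$, i.e.\ the drift is kept. Your scheme lacks all three of these ingredients---the at-most-one-close-visit lemma, the single-visit drift, and the use of the linear bound as an upper cap rather than a source of cancellation---and without them the argument does not close.
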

\begin{uw}
In view of the results from~\cite{FKa}, we do not expect the special flows we consider to satisfy weak Ratner's property. Since the dynamical consequences of weak Ratner's property and switchable Ratner's property which we are interested in are the same, we concentrated on proving that the latter of these two properties holds, rather than disproving the first of them.
\end{uw}
\begin{uw}
Notice that so far Ratner's property, or some of its weaker versions, was established for special flows over irrational rotations (also horocycle flows fall into this scheme as they are loosely Bernoulli). Thus, Theorem~\ref{main} provides the first concrete examples of special flows with the base automorphism not being an isometry, not being continuous and satisfying a Ratner's property. 
\end{uw}

Special flows which we consider are natural representations of some smooth flows on surfaces. Let $\omega$ be a smooth closed differential 1-form on a closed compact orientable surface $M$ of genus $\mathbf{g}\geq 2$, equipped with a fixed smooth area form. Locally $\omega$, as a closed form, is given by $dH$ for some real-valued function $H$. This determines a flow $(\varphi_t)_{t\in\mathbb{R}}$ on $M$, given by the local solutions of the following system of differential equations: $\dot{x}=\partial H / \partial y,\ \dot{y}=-\partial H/\partial x$. The flow $(\varphi_t)_{t\in\mathbb{R}}$ is always area-preserving. Its special flow representation was derived in~\cite{Arnold91} and~\cite{MR0415681}, see also~\cite{Zorich94u}, and it is  of the form described in the beginning of this section. As any special flow representation, it arises by taking a cross section and considering the first return map -- it gives the \emph{base automorphism}, and the first return time -- it gives the \emph{roof function}. The study of such flows was originally motivated by questions from physics \cite{MR676612}. Later on, they became of some interest also from the point of view of dynamical systems. Crucial in this context is the following result, proved independently in various settings in~\cite{MR630878,MR0009485,MR1733872}: if $\omega$ is a Morse form (equivalently, if $H$ is a Morse function, i.e.\ it has a finite number of singular points, all of which are non-degenerated) then the surface can be decomposed into a finite number of periodic components (where all points are periodic) and a finite number of minimal components (where all forward and backward trajectories of the flow are dense). This, in turn, motivates an ergodic theoretical question, whether on each such minimal component the flow is ergodic and, if the answer is positive, what are its mixing properties. We additionally assume that the flow has no saddle connections (i.e.\ there are no orbits which contain both an incoming and an outgoing separatrix of a saddle). In such a case, $(\varphi_t)_{t\in\R}$ is known to be minimal~\cite{MR0009485}. 

Recall that ergodicity of a flow is equivalent to ergodicity of the base automorphism in its special flow representation. Moreover, mixing properties of special flows depend strongly on the properties of the roof function. Recall also that almost every special flow over an IET under a roof function with symmetric logarithmic singularities is known to be weakly mixing~\cite{MR2481331} and not mixing~\cite{MR2800723}. It is therefore natural to ask whether such flows satisfy the subtler property of being mildly mixing. We give the positive answer to this question, provided that the base IET is of bounded type:
\begin{wn}\label{wn3}
Let $\cT=(T_t^f)_{t\in\R}$ be a special flow built over an IET $T:\T\to\T$ of bounded type, under a roof function $f$ with symmetric logarithmic singularities at a subset of discontinuities of $T$. Then $\cT$ is mildly mixing.
\end{wn}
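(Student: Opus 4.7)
The plan is to feed Theorem~\ref{main} into Proposition~\ref{cons}, so that mild mixing follows from the combination of switchable Ratner's property, weak mixing, and non-partial-rigidity.

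First, I would collect the easier inputs. Weak mixing of $\cT$ is known from~\cite{MR2481331}, and Theorem~\ref{main} furnishes switchable Ratner's property. Following the argument in~\cite{FKa}, where switchable Ratner was introduced and shown to imply the joining-rigidity statement alluded to in Section~\ref{se:1.1}, I would deduce the FEJ-property for $\cT$: for every ergodic flow $\cS$ on $(Y,\mathcal{C},\nu)$ and every ergodic joining $\rho \neq \mu \otimes \nu$ of $\cT$ and $\cS$, the product flow $\cT \times \cS$ on $(X \times Y, \mathcal{B} \otimes \mathcal{C}, \rho)$ is a finite extension of $\cS$. The proof of this implication is essentially the one given for the weak Ratner property in~\cite{MR2237466}; the only modification is that one must switch between positive and negative time directions, as permitted by the ``switchable'' variant.

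Second, I would verify that $\cT$ is not partially rigid, i.e.\ that there is no $\alpha > 0$ and no sequence $t_k \to \infty$ with $\liminf_k \mu(T^{t_k}_f A \cap A) \geq \alpha \mu(A)$ for every $A \in \mathcal{B}$. This is the step I expect to be the main obstacle. My approach is to exploit the same mechanism that underlies Theorem~\ref{main}: the symmetric logarithmic singularities of $f$ produce an unbounded shear of nearby orbits along the flow direction, controlled by Denjoy--Koksma type estimates for the Birkhoff sums of $f'$ over an IET of bounded type. Concretely, I would pick a thin rectangular set $A$ built from a short horizontal interval $J$ in the base. Any candidate rigidity time $t_k$ is, up to a bounded error, realised on $A$ as a combination of a base return and the corresponding Birkhoff sum $S_{n_k} f(x)$, and the logarithmic shearing forces these sums to oscillate across $J$ by amounts much larger than the vertical height of $A$. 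A direct volume estimate then yields $\mu(T^{t_k}_f A \cap A) \to 0$, contradicting partial rigidity.

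With weak mixing, non-partial-rigidity and the FEJ-property in hand, Proposition~\ref{cons} immediately gives mild mixing of $\cT$, which is the statement of Corollary~\ref{wn3}. Combining this with the non-mixing result of~\cite{MR2800723} then produces, on every compact connected orientable surface of genus at least two, concrete examples of smooth area-preserving flows that are mildly mixing but not mixing.
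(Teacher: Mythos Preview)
Your overall architecture---Theorem~\ref{main} gives SWR, SWR implies FEJ via the theorem of~\cite{FKa} quoted in Section~\ref{se:swr}, weak mixing comes from~\cite{MR2481331}, and then Proposition~\ref{cons} yields mild mixing---is exactly the paper's. The divergence is entirely in how non-partial-rigidity is obtained.

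The paper does \emph{not} argue this directly. It observes that absence of partial rigidity for special flows of this type was already established in~\cite{MR2974212}, but under the hypothesis that the base IET has \emph{balanced partition lengths} (Definition~\ref{de:2}) rather than that it is of bounded type. The paper's contribution at this point is Proposition~\ref{as} in Section~\ref{se:ba}, which shows these two conditions on an IET are equivalent; non-partial-rigidity then follows by citation. So the paper trades an analytic argument about the flow for a purely combinatorial statement about Rauzy--Veech induction.

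Your proposed direct shearing argument is in the right spirit---it is roughly the mechanism underlying~\cite{MR2974212}---but the sketch is misleading on one point and too thin on another. First, in the \emph{symmetric} logarithmic case the Birkhoff sums of $f'$ enjoy strong cancellations (this is precisely Proposition~\ref{canc}, and the reason mixing fails~\cite{MR2800723}); the phrase ``unbounded shear'' therefore needs care, since one does not get the wild stretching of the asymmetric/Kochergin situation. Second, picking a single thin rectangle $A$ and arguing that $\mu(T^f_{t_k}A\cap A)\to 0$ requires controlling the oscillation of $f^{(n_k)}$ across $J$ \emph{for the given sequence} $(t_k)$, and the set $A$ will in general have to depend on that sequence; the actual argument in~\cite{MR2974212} handles this with some care. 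None of this is fatal, but it means your step~2 is essentially a proposal to reprove~\cite{MR2974212}, whereas the paper's route---prove the combinatorial equivalence and cite---is both shorter and reuses existing work.
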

Moreover, as an immediate consequence we obtain the following:
\begin{wn}
On each compact, connected, orientable surface of genus greater than one there exist flows which are mildly mixing. 
\end{wn}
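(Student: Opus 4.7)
The plan is to reduce the statement to Corollary~\ref{wn3} via the Arnold--Katok special flow representation of smooth area-preserving flows on surfaces that is recalled in Section~\ref{se:1.2}. Given a surface $M$ of genus $\mathbf{g}\geq 2$, the task is to exhibit on $M$ a smooth area-preserving flow $(\varphi_t)_{t\in\R}$ whose special flow representation fits the hypotheses of Corollary~\ref{wn3}.

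First, I would recall that any smooth closed Morse 1-form $\omega$ on $M$ without saddle connections determines, via the local Hamiltonian equations $\dot x = \partial H/\partial y$, $\dot y = -\partial H/\partial x$, a minimal area-preserving flow on each minimal component. Taking a transversal and the associated first return map gives a special flow over an IET $T$ on $\T$ with a roof $f$ that is $C^2$ off a finite subset of discontinuities of $T$ and has symmetric logarithmic singularities at those points; the symmetry is automatic from non-degeneracy of the saddles of $\omega$. This matches the setup of Corollary~\ref{wn3} except possibly for the bounded-type hypothesis on $T$.

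Second, I would arrange that $T$ is of bounded type. Within each Rauzy class of admissible combinatorial data for a minimal area-preserving flow on $M$ (nonempty for $\mathbf{g}\geq 2$, since $M$ supports IETs on sufficiently many intervals), bounded-type IETs exist explicitly as periodic points of the Rauzy--Veech induction. Conversely, any such IET together with compatible suspension data is realized, via the zippered rectangles construction of Veech followed by a local smoothing of the translation structure at each saddle, as the first return map of a genuine smooth area-preserving flow on a surface of the prescribed genus whose special flow representation has exactly the form required. Corollary~\ref{wn3} applied to this flow then yields mild mixing.

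The hard part is the realization step: showing that one can independently prescribe the base IET to be of bounded type while producing a genuinely smooth area-preserving flow on $M$ itself (not merely on some surface of the same genus) with symmetric logarithmic singularities in the roof. In practice this is a standard consequence of the Veech--Masur theory of strata of translation surfaces, combined with Katok's local smoothing at each non-degenerate saddle; the topological type of the resulting surface is determined by the Rauzy class chosen, so choosing a class compatible with genus $\mathbf{g}$ closes the argument.
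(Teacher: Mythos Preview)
Your proposal is correct and matches the paper's intended (but unwritten) argument: the paper states this corollary as an ``immediate consequence'' of Corollary~\ref{wn3} together with the special flow representation discussed in Section~\ref{se:1.2}, and your outline---choose a bounded-type IET in a Rauzy class corresponding to genus $\mathbf{g}$ (e.g.\ a periodic point of Rauzy--Veech induction), suspend via zippered rectangles, and smooth at the conical points to obtain a smooth area-preserving flow with non-degenerate saddles---is exactly the standard way to unpack that implication.

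One remark: your worry in the last paragraph about realizing the flow on ``$M$ itself'' rather than ``some surface of the same genus'' is not a genuine obstacle. Compact, connected, orientable surfaces are classified up to diffeomorphism by their genus, so once you have produced the desired flow on \emph{some} genus-$\mathbf{g}$ surface, you can transport it to any prescribed $M$ of the same genus by a diffeomorphism. Thus the ``hard part'' you flag is really just the existence of bounded-type IETs in a Rauzy class of the right genus together with the Arnold--Kochergin smoothing, both of which are classical.
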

In view of Theorem~\ref{main} and the discussion in Section~\ref{se:1.1}, a natural path which can be taken in order to prove Corollary~\ref{wn3} involves showing that the flows under consideration are weakly mixing and not partially rigid. Weak mixing follows immediately from~\cite{MR2481331}, as the class of IETs considered there includes all IETs of bounded type. Moreover, under the additional assumption that the base IET satisfies so-called balanced partition lengths condition, such flows are not partially rigid~\cite{MR2974212}. In Section~\ref{se:ba} we show that for an IET this additional condition is equivalent to being of bounded type, thus making the proof of Corollary~\ref{wn3} complete.

The authors would like to thank Krzysztof Fr\k{a}czek, Mariusz Lema\'{n}czyk and Corinna Ulcigrai for their interest, valuable discussions and suggestions.
\section{Basic definitions}\label{se:def}
\subsection{Special flows}
Let $T:(X,\mathcal{B},\mu)\to(X,\mathcal{B},\mu)$ be an ergodic automorphism and let $f\in L^1(X,\mathcal{B},\mu)$ be a strictly positive function. One defines a $\Z$-cocycle by setting
$$
f^{(n)}(x)=\left\{\begin{array}{ccc}
f(x)+\ldots+f(T^{n-1}x) &\text{if} &n>0\\
0&\text{if} &n=0\\
-(f(T^nx)+\ldots+f(T^{-1}x))&\text{if} &n<0.\end{array}\right.
$$
A {\em special flow} $\cT=(T_t^f)_{t\in\R}$ over the \emph{base automorphism} $T$ under the \emph{roof function} $f$ is a flow acting on $(X^f,\mathcal{B}^f,\mu^f)$, where $X^f=\{(x,s) : x\in X, 0\leq s<f(x)\}$, whereas $\mathcal{B}^f$ and $\mu^f$ are the restrictions of $\mathcal{B}\otimes\mathcal{B}(\R)$ and $\mu\otimes \lambda$ to $X^f$ respectively ($\lambda$ stands for Lebesgue measure on $\R$). Under the action of $\cT$ points are moved vertically upward with unit speed, and we identify $(x,f(x))$ with $(Tx,0)$, i.e.\ for $(x,s)\in X^f$ we have
$$
T_t^f(x,s)=(T^nx,s+t-f^{(n)}(x)),
$$
where $n\in \Z$ is unique such that $f^{(n)}(x)\leq s+t<f^{(n+1)}(x)$. Moreover, if $X$ is a metric space with a complete metric $d$, so is $X^f$, with $d^f((x,s),(y,s'))=d(x,y)+|s-s'|$.
\subsection{Joinings}\label{se:joi}
$\cT=(T_t)_{t\in\R}$ and $\cS=(S_t)_{t\in\R}$ be two ergodic flows acting on $(X,\mathcal{B},\mu)$ and $(Y,\mathcal{C},\nu)$ respectively. We say that a measure $\rho$ on $(X\times Y,\mathcal{B}\otimes \mathcal{C})$ is a joining between $\cT$ and $\cS$ if $\rho$ is $\cT\times \cS$-invariant, $\rho|_{\mathcal{B}\otimes \{\emptyset,Y\}}=\mu$ and $\rho|_{\{\emptyset, X\}\otimes \mathcal{C}}=\nu$. We denote the set of such joinings by $J(\cT,\cS)$. The set of ergodic joinings is denoted by $J^e(\cT,\cS)$. A joining $\rho\in J(\cT,\cS)$ is called a {\em finite extension} of $\nu$ if the natural projection $\pi:(X\times Y,\mathcal{B}\otimes \mathcal{C},\rho, \cT\times\cS)\to (Y,\mathcal{C},\nu,\cS)$ is finite to one. Let $\{A_n: n\in\N \}\subset \mathcal{B}, \{B_n: n\in\N\}\subset \mathcal{C}$  be two countable families, dense in $\mathcal{B}$ and $\mathcal{C}$ for the pseudometrics $d_\mu(A,A')=\mu(A\triangle A')$ and $d_\nu(B,B')=\nu(B\triangle B')$ respectively. Then $J(\cT,\cS)$ endowed with the metric $d$ given by 
$$
d(\rho,\rho')=\sum_{m,n\in\N}\frac{1}{2^{m+n}}|\rho(A_n\times B_m)-\rho'(A_n\times B_m)|
$$
is compact. We will refer to the corresponding topology as the weak topology. For more information about the theory of joinings, we refer the reader e.g. to~\cite{MR1958753}.

Let $(t_n)_{n\in\N}\subset \R$ be such that $t_n\to \infty$. Recall that $\cT$ is said to be~\emph{rigid} along $(t_n)$ if $\mu(T_{t_n}A\cap A)\to \mu(A)$ for all $A\in\mathcal{B}$. In other words, $\cT$ is rigid along $(t_n)$ if $\mu_{T_n}\to \mu_{Id}$ weakly, where for $R\in C(\cT)$, $\mu_R\in J(\cT,\cT)$ is given by $\mu_R(A\times B)=\mu(RA\cap B)$.\footnote{$C(\cT)$ stands for the centralizer of the flow $\cT$.} Moreover, $\cT$ is called \emph{partially rigid} along $(t_n)$ if there exists $a\in (0,1]$ such that for every $A\in\mathcal{B}$, $\liminf_{n\to \infty}\mu(T_{t_n}A\cap A)\geq \mu(A)$. Partial rigidity also can be expressed in terms of joinings: $\cT$ is partially rigid along $(t_n)$ if and only if for any $(n_k)_{k\in \N}$ such that the sequence $(T_{t_{n_k}})_{k\in\N}$ converges in the weak operator topology, we have $\lim_{k\to\infty}\mu_{T_{t_{n_k}}}(A\times B)=a\mu_{Id}(A\times B)+(1-a)\rho(A\times B)$ for some $\rho\in J(\cT,\cT)$.


\subsection{Interval exchange transformations}\label{se:iet}
Recall that an \emph{interval exchange transformation} (IET) is a piecewise orientation preserving isometry of a finite interval to itself. It is determined by its combinatorial data and length data in the following way.\footnote{We use the notation introduced in~\cite{MMY05}. For more information on interval exchange transformations, we refer the reader e.g. to~\cite{VianaBook,Yoccoz06}} Let $\cA$ be a finite alphabet with $d\geq 1$ letters. The~\emph{combinatorial data} of an IET is a pair of bijections $(\pi_0,\pi_1)$, $\pi_\vep\colon \cA\to\{1,\dots,d\}$ for $\vep=0,1$. The \emph{length data} of an IET is a vector $\lambda=(\lambda_\alpha)_{\alpha\in\cA}\in\R_+^\cA$, where $\R_+=(0,+\infty)$. We set $|\lambda|:=\sum_{\alpha\in\cA}\lambda_\alpha$, $|I|=[0,|\lambda|)$ and
\begin{align*}
&I_\alpha:=[\ell_\alpha,r_\alpha),\text{ where }\ell_\alpha=\sum_{\pi_0(\beta)<\pi_0(\alpha)}\lambda_\beta\text{ and } r_\alpha=\sum_{\pi_0(\beta)\leq\pi_0(\alpha)}\lambda_\beta,\\
&I_\alpha':=[\ell_\alpha',r_\alpha'),\text{ where }\ell_\alpha'=\sum_{\pi_1(\beta)<\pi_1(\alpha)}\lambda_\beta\text{ and } r_\alpha'=\sum_{\pi_1(\beta)\leq\pi_1(\alpha)}\lambda_\beta,
\end{align*}
Then clearly, $|I_\alpha|=\lambda_\alpha$ for $\alpha\in\cA$. The IET $T=T_{\pi,\lambda}\colon [0,|\lambda|)\to[0,|\lambda|)$ defined by $(\pi,\lambda)$ translates $I_\alpha$ to $I_\alpha'$ for each $\alpha\in\cA$. The endpoints of $I_\alpha$ are called the \emph{discontinuities} of $T$. We set $\Sigma_{\pi,\lambda}=\Sigma_T:=\{\ell_\alpha,r_\alpha:\alpha\in\cA\}$. Recall that that $T$ is said to be \emph{irreducible} if
$$
\pi_0^{-1}(\{1,\dots,j\})\neq\pi_1^{-1}(\{1,\dots,j\}) \text{ for }1\leq j<d.
$$
We say then that the pair $(\pi_0,\pi_1)$ is \emph{admissible}. Recall also that $T$ satisfies the \emph{Keane condition} if the orbits of $\ell_\alpha$ with $\pi_0(\alpha)\neq 1$ are infinite and disjoint. This condition implies minimality~\cite{Keane75}.

A special case of IETs are irrational rotations on the circle. Given $\alpha\not\in\Q$, we denote by $T_\alpha x=x+\alpha$ the corresponding
irrational rotation on $(\mathbb{T}, \mathcal{B}(\mathbb{T}), \lambda)$. The circle $\mathbb{T}=\mathbb{R}/\mathbb{Z}$ is identified with the interval $[0,1)$, the measure $\lambda$ is the Lebesgue measure inherited from $[0,1)$. Rotation on the circle is an exchange of two intervals. By $(q_n)_{n\geq 0}$ we denote the sequence of denominators of $\alpha$ in its continued fraction expansion $[a_0;a_1,a_2,\dots]$, i.e.\ we have
$$
q_0=1,\ q_1=a_1,\ q_{n+1}=a_{n+1}q_n+q_{n-1}.
$$

\subsubsection{Rauzy-Veech induction}
Let $T=T_{\pi,\lambda}$ with $\pi$ irreducible be an IET exchanging $d$ intervals, satisfying the Keane condition. Then $\lambda_{\pi_0^{-1}(d)}\neq \lambda_{\pi_1^{-1}(d)}$ and we set 
$$
\widetilde{I}:=\left[0,\max\left(\ell_{\pi_0^{-1}(d)},\ell_{\pi_1^{-1}(d)}\right)\right)
$$
and denote by $\mathcal{R}(T)=\widetilde{T}\colon \widetilde{I}\to \widetilde{I}$ the first return map of $T$ to the interval $\widetilde{I}$. Then $\widetilde{T}$ is again an IET exchanging $d$ intervals~\cite{Rauzy79}. More precisely, $\widetilde{T}$ is given by the following combinatorial and length data. Let
$$
\vep(\pi,\lambda):=\begin{cases}
0& \text{if }\lambda_{\pi_0^{-1}(d)}>\lambda_{\pi_1^{-1}(d)}\\
1& \text{if }\lambda_{\pi_0^{-1}(d)}<\lambda_{\pi_1^{-1}(d)}.
\end{cases}
$$
In either case, the longer of the two intervals $I_{\pi_0^{-1}(d)}$ and $I_{\pi_1^{-1}(d)}$ is called the \emph{winner}. The pair $\widetilde{\pi}=R_\vep(\pi_0,\pi_1)=(\widetilde{\pi}_0,\widetilde{\pi}_1)$ is given by
\begin{align*}
\widetilde{\pi}_\vep(\alpha)&:=\pi_\vep(\alpha)\text{ for every }\alpha\in\mathcal{A},\\
\widetilde{\pi}_{1-\vep}(\alpha)&:=\begin{cases}
\pi_{1-\vep}(\alpha)&\text{ if }\pi_{1-\vep}(\alpha)\leq \pi_{1-\vep}\circ \pi_\vep^{-1}(d)\\
\pi_{1-\vep}(\alpha)+1&\text{ if }\pi_{1-\vep}\circ \pi_\vep^{-1}(d)<\pi_{1-\vep}(\alpha)<d\\
\pi_{1-\vep}\circ\pi_\vep^{-1}(d)+1&\text{ if }\pi_{1-\vep}(\alpha)=d.
\end{cases}
\end{align*}
Finally, $\widetilde{\lambda}=\Theta^{-1}(\pi,\lambda)\lambda$, where
$$
\Theta(T)=\Theta(\pi,\lambda):=I+E_{\pi_\vep^{-1}(d),\pi_{1-\vep}^{-1}(d)}\in SL(\Z^\cA)
$$
(for $\alpha,\beta\in\mathcal{A}$ the matrix $E_{\alpha,\beta}$ has only one non-zero entry: the value at position $(\alpha,\beta)$ is equal to $1$). 

Recall that if $T$ satisfies the Keane condition, so does $\widetilde{T}$. This means that the above procedure can be iterated, giving a sequence of IETs $(\mathcal{R}^n(T))_{n\geq 0}$. Denote by $\pi^n=(\pi_0^n,\pi_1^n)$, $\lambda^n=(\lambda_\alpha^n)_{\alpha\in\mathcal{A}}$ the combinatorial data and the length data defining $\mathcal{R}^n(T)$. Then $\mathcal{R}^n(T)$ is the first return map of $T$ to the interval $I^n=[0,|\lambda^n|)$. Moreover,
\begin{equation}\label{eq:odcinki}
\lambda^{n-1}=\Theta(\mathcal{R}^{n-1}(T))\lambda^{n},
\end{equation}
whence
$$
\lambda=\Theta^{(n)}(T)\lambda^n, \text{ where }\Theta^{(n)}(T)=\Theta(T)\cdot \Theta(\mathcal{R}(T))\cdot\ldots\cdot \Theta(\mathcal{R}^{n-1}(T)).
$$
Finally, let $I^n_\alpha$, $\alpha\in\mathcal{A}$ be the intervals exchanged by $\mathcal{R}^n(T)$.

\subsubsection{Acceleration of Rauzy-Veech induction}
Let $T$ be an IET satisfying the Keane condition. Given an increasing sequence $(n_k)_{k\geq 0}\subset \N$ with $n_0=0$, one can define an acceleration of the Rauzy induction algorithm in the following way. Let
\begin{equation}\label{koc}
B(n_k,n_{k+1}):=\Theta(\mathcal{R}^{n_k}(T))\cdot\Theta(\mathcal{R}^{n_k+1}(T))\cdot\ldots\cdot \Theta(\mathcal{R}^{n_{k+1}-1}(T)).
\end{equation}
Then, for any $k<k'$,
$$
\lambda^{n_k}=h(n_k,n_{k'})\lambda^{n_{k'}},
$$
where 
$$
h(n_k,n_{k'})=B(n_k,n_{k+1})B(n_{k+1},n_{k+2})\cdot\ldots\cdot B(n_{k'-1},n_{k'}).
$$
We will write $h^{n_k}$ for $h(0,n_k)$. By the definition, $\mathcal{R}^{n_{k'}}(T)\colon I^{n_{k'}}\to I^{n_{k'}}$ is the first return map of $\mathcal{R}^{n_k}(T)\colon I^{n_k}\to I^{n_k}$ to the interval $I^{n_{k'}}\subset I^{n_k}$. Moreover, $h_{\alpha,\beta}(k,k')$ is the time spent by any point from $I_\beta^{n_{k'}}$ in $I_{\alpha}^{n_k}$ until it returns to $I^{n_{k'}}$. Therefore $h_\beta(n_k,n_{k'}):=\sum_{\alpha\in\mathcal{A}}h_{\alpha,\beta}(n_k,n_{k'})$ is the first return time of points of $I_\beta^{n_{k'}}$ to $I^{n_{k'}}$. Notice that this quantity does not depend on $k$. We will therefore write $h_\beta^{n_{k'}}$ for $h_\beta(n_k,n_{k'})$.

There are two particular types of acceleration of the Rauzy induction algorithm which are interested in. The first of them was considered by Zorich~\cite{Zorich96}, by taking $n_0=0$ and $n_{k+1}=n(\pi^{n_k},\lambda^{n_k})$, where 
$$
n(\pi,\lambda)=\min\{k\geq 1 : \vep(\pi^k,\lambda^k)=1-\vep(\pi,\lambda)\}.
$$
A further acceleration of the Rauzy induction algorithm was defined by Marmi, Moussa and Yoccoz in~\cite{MMY05}. Before we give the details, we need to recall the notion of the \emph{Rauzy diagram}. It is a diagram associated to the Rauzy induction algorithm, whose vertices are admissible pairs $(\pi_0,\pi_1)$. Each vertex $(\pi_0,\pi_1)$ is the starting point of two arrows with endpoints $R_0(\pi_0,\pi_1)$ and $R_1(\pi_0,\pi_1)$. We say that an arrow $\gamma$ in the Rauzy diagram \emph{takes the name} $\alpha\in\mathcal{A}$ if $I_\alpha$ is the winner for this induction step. For an IET $T$ satisfying the Keane condition let $\gamma^{(n)}$ be the arrow in the Rauzy diagram connecting $(\pi_0^{(n-1)},\pi_1^{(n-1)})$ to $(\pi_0^{(n)},\pi_1^{(n)})$. Now fix $1\leq \widetilde{d}<d$. We define inductively an increasing sequence $n_{\widetilde{d},k}$ by setting $n_{\widetilde{d},0}:=0$ and letting $n_{\widetilde{d},k+1}$ be the largest integer such that no more than $\widetilde{d}$ names are taken by the arrows $\gamma^{(n)}$ for $n_{\widetilde{d},k}< n\leq n_{\widetilde{d},k+1}$. Clearly, for $1<\widetilde{d}<d$, $(n_{\widetilde{d},k})_{k\geq 0}$ is a subsequence of $(n_{\widetilde{d}-1,k})_{k\geq 0}$. The case $\widetilde{d}=1$ corresponds to the acceleration considered by Zorich described above, whereas in~\cite{MMY05} the emphasis was put on the case $\widetilde{d}=d-1$. For simplicity, instead of writing $(n_{d-1,k})_{k\geq 0}$ we will write $(m_k)_{k\geq 1}$. We will refer to  $B(m_k,m_{k+1})$ as the \emph{Marmi-Moussa-Yoccoz} (MMY) cocycle of $T$.

\section{IETs of bounded type}\label{se:ba}
By the norm of a matrix (or a vector) we will mean the largest absolute value of the coefficients, i.e.\ for $B=(B_{\alpha,\beta})_{\alpha,\beta\in\mathcal{A}}$ we set $\|B\|:=\max_{\alpha,\beta\in\mathcal{A}}|B_{\alpha,\beta}|$. Recall that there are several ways to define \emph{IETs of bounded type}, see~\cite{Hubert:2012fk} and~\cite{MR3178778}. We will use the definition which is given in terms of the MMY cocycle. We will also use the notion of an IET with~\emph{balanced partition lengths} which was introduced in~\cite{MR2974212}.
\begin{df}\label{de:1}
An IET $T$ is said to be of \emph{bounded type} if the MMY cocycle of $T$ is bounded, i.e.\ for some $C>0$
\begin{equation}\label{mmy}
\|B(m_k,m_{k+1})\|\leq C\text{ for every }k\in\N.
\end{equation}
\end{df}
Given a matrix $A\in SL(d,\Z)$ with strictly positive entries, following~\cite{Veech81}, we set
\begin{align*}
\nu_1(A)&:=\max\{{A_{\alpha\gamma}}/{A_{\beta\gamma}}:\alpha,\beta,\gamma\in\cA\},\\
\nu_2(A)&:=\max\{{A_{\gamma\alpha}}/{A_{\gamma\beta}}:\alpha,\beta,\gamma\in\cA\},\\
\nu(A)&:=\max\{\nu_1(A),\nu_2(A)\}.
\end{align*}
Notice that $\nu(A)\leq \|A\|$. Then, since
$$
\lambda^{m_k}=B(m_k,m_{k+1})\lambda^{m_{k+1}}\text{ and }h^{m_{k+1}}=h^{m_{k}}B(m_{k},m_{k+1}),
$$
for all $\alpha,\beta\in\cA$ we have
\begin{align}
\begin{split}\label{balancedtimes}
\frac{1}{C}|I_\beta^{m_k}|&\leq |I_\alpha^{m_k}|\leq C|I_\beta^{m_k}|,\\
\frac{1}{C}h_\beta^{m_k}&\leq h_\alpha^{m_k}\leq Ch_\beta^{m_k}.
\end{split}
\end{align}
Therefore, 
\begin{equation}\label{pigeon}
\frac{|I^0|}{dC^2}\leq|I_\alpha^{m_k}|\cdot h_\alpha^{m_k}\leq |I^0| \text{ for all }\alpha\in\cA.
\end{equation}
Moreover, for all $\alpha,\beta\in\cA$,
\begin{align}
\begin{split}\label{wys0}
h_\beta^{m_{k+1}}&=\sum_{\alpha,\gamma\in\cA}h_{\alpha,\gamma}^{m_k}h_{\gamma,\beta}(m_k,m_{k+1})\leq C\sum_{\gamma\in\cA}\sum_{\alpha\in\cA}h_{\alpha,\gamma}^{m_k}\\
&=C\sum_{\gamma\in\cA}h_{\gamma}^{m_k}\leq dC^2\cdot h_{\alpha}^{m_k},
\end{split}
\end{align}
\begin{equation}
h_\beta^{m_{k+1}}=\sum_{\alpha,\gamma\in\cA}h_{\alpha,\gamma}^{m_k}h_{\gamma,\beta}(m_k,m_{k+1})\geq \sum_{\alpha,\gamma\in\cA}h_{\alpha,\gamma}^{m_k}= \sum_{\gamma\in\cA}h_{\gamma}^{m_k}\geq \frac{d}{C}\cdot h_\alpha^{m_k}\label{wys}
\end{equation}
and
\begin{align*}
|I_\beta^{m_{k}}|&=\sum_{\gamma\in\cA}h_{\gamma,\beta}(m_k,m_{k+1})|I_\gamma^{m_{k+1}}|\leq C\sum_{\gamma\in\cA}|I_{\gamma}^{m_{k+1}}|\leq dC^2\cdot |I_\alpha^{m_{k+1}}|,\\
|I_\beta^{m_k}|&=\sum_{\gamma\in\cA} h_{\gamma,\beta}(m_k,m_{k+1})|I_\gamma^{m_{k+1}}|\geq \sum_{\gamma\in\cA}|I_{\gamma}^{m_{k+1}}|\geq \frac{d}{C}\cdot |I_\alpha^{m_{k+1}}|.
\end{align*}

Given a partition $\mathcal{P}$ of an interval into subintervals we denote by $\min \cP$ and $\max \cP$ the smallest and the largest length of the atoms in these partitions respectively. Moreover, for a finite set $A\subset [0,1)$ we denote by $\cP(A)$ the partition of $[0,1)$ determined by $A$. For $\alpha\in\cA$, $n\in\N$ and $j\in\Z$ let $\mathcal{P}_{n,j}^\alpha$ and $\mathcal{P}_{n,j}$ be the partitions of the interval $[0,|\lambda|)$ defined in the following way:
$$
\cP_{n,j}^\alpha:=\cP(\{T^{-k+j}\ell_\alpha : 0\leq k\leq n-1\})\text{ and }\cP_{n,j}:=\bigcup_{\alpha\in\cA}\cP_{n,j}^{\alpha}.
$$
\begin{df}[cf. \cite{MR2974212}]\label{de:2}
We say that the interval exchange transformation $T=T_{\pi,\lambda}$ has \emph{balanced partition lenghts} whenever there exists $c>0$ such that for any $n\in\N$ the following two conditions hold:
\begin{enumerate}[(i)]
\item
for any $\alpha\in\cA$ and any $0\leq j\leq n-1$, we have
$$
\frac{1}{cn} <\min \cP_{n,j}^\alpha \leq \max \cP_{n,j}^\alpha <\frac{c}{n},
$$
\item\label{2}
for any $0\leq j\leq n-1$, we have
$$
\frac{1}{cn} <\min \cP_{n,j} \leq \max \cP_{n,j} <\frac{c}{n}.
$$
\end{enumerate}
\end{df}
\begin{uw}
In the original definition of balanced partition lenghts in~\cite{MR2974212} the quantifiers were different: in~\eqref{2} instead of $0\leq j \leq n-1$ only $j=0$ was considered.
\end{uw}

Recall that in case of irrational rotations, i.e.\ IETs of two intervals the following are equivalent:
\begin{itemize}
\item
$\alpha$ has bounded partial quotients, i.e.\ the exists $M>0$ such that $a_n<M$ for all $n$, where $[a_0;a_1,a_2,\dots]$ is the continued fraction expansion of $\alpha$,
\item
the associated IET is of bounded type,
\item
the associated IET has balanced partition lengths.
\end{itemize}
Also, in case of IETs of more than two intervals there is a relation between the notions of \emph{being of bounded type} and \emph{having balanced partition lenghts}:
\begin{pr}[\cite{Hubert:2012fk}: Proposition 1.1 and Theorem 4.7 and~\cite{MR3178778}: Section 2]\label{rowy}
Let $T$ be an IET. Then $T$ is of bounded type if and only if there exists $c>0$ such that
$$
 \frac{1}{cn}\leq \min \mathcal{P}_{n,0} \text{ for all }n\in\N.
$$
\end{pr}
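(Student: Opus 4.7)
The plan is to pass through the dynamical (Rokhlin tower) partitions furnished by the MMY acceleration. For each $k$ let $\mathcal{Q}_k$ be the partition of $[0,|\lambda|)$ into the atoms $T^jI_\alpha^{m_k}$, $\alpha\in\cA$, $0\leq j<h_\alpha^{m_k}$. Every endpoint of an atom of $\mathcal{Q}_k$ is of the form $T^{-s}\ell_\beta$ for some $\beta\in\cA$ and some $0\leq s<h_{\max}^{m_k}:=\max_\alpha h_\alpha^{m_k}$, because the discontinuities of the first return map to $I^{m_k}$ are preimages of original discontinuities under iterations of $T$ whose lengths are bounded by the tower heights. Hence $\mathcal{Q}_k$ is refined by $\mathcal{P}_{n,0}$ as soon as $n\geq h_{\max}^{m_k}$, and $\mathcal{Q}_k$ refines $\mathcal{P}_{n,0}$ as soon as $n\leq h_{\min}^{m_k}:=\min_\alpha h_\alpha^{m_k}$. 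This sandwich is the structural heart of the argument.

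For the implication "$T$ of bounded type $\Rightarrow$ $\min\mathcal{P}_{n,0}\geq 1/(cn)$", fix $n$ and pick the unique $k$ with $h_{\max}^{m_k}\leq n<h_{\max}^{m_{k+1}}$. Since $\mathcal{Q}_{k+1}$ refines $\mathcal{P}_{n,0}$, every atom of $\mathcal{P}_{n,0}$ has length at least $\min_\alpha|I_\alpha^{m_{k+1}}|$. By \eqref{balancedtimes} and \eqref{pigeon} the bounded type hypothesis gives $|I_\alpha^{m_{k+1}}|\geq |I^0|/(dC^3 h_\alpha^{m_{k+1}})$. Finally, \eqref{wys0} together with the bracketing $h_{\max}^{m_k}\leq n$ yields $h_\alpha^{m_{k+1}}\leq dC^2 h_{\max}^{m_k}\leq dC^2 n$, so $\min\mathcal{P}_{n,0}\geq c'/n$ with $c'$ depending only on $d,C,|I^0|$.

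For the converse, I would argue by contrapositive. Suppose the MMY cocycle fails to be bounded along some $k_j\to\infty$, so some entry of $B(m_{k_j},m_{k_j+1})$ tends to infinity. Using
\[
h^{m_{k_j+1}}=h^{m_{k_j}}B(m_{k_j},m_{k_j+1}), \qquad \lambda^{m_{k_j}}=B(m_{k_j},m_{k_j+1})\lambda^{m_{k_j+1}},
\]
this forces either a ratio $h_\alpha^{m_{k_j+1}}/h_\beta^{m_{k_j+1}}$ or a ratio $|I_\gamma^{m_{k_j}}|/|I_\delta^{m_{k_j}}|$ to blow up. Choosing $n\asymp h_{\max}^{m_{k_j+1}}$, the upper refinement $\mathcal{Q}_{k_j+1}\succeq\mathcal{P}_{n,0}$ above is essentially sharp, so $\mathcal{P}_{n,0}$ contains an atom of length comparable to $\min_\alpha|I_\alpha^{m_{k_j+1}}|$, which by \eqref{pigeon} (only the upper half of which is unconditional) is bounded above by $|I^0|/h_{\max}^{m_{k_j+1}}\cdot(h_{\min}/h_{\max})^{m_{k_j+1}}$; the blown-up ratio makes this decay faster than $1/(cn)$ for any prescribed $c$.

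The main obstacle is the sandwich lemma: the rough statements "$\mathcal{Q}_{k+1}$ refines $\mathcal{P}_{n,0}$ when $n<h_{\max}^{m_{k+1}}$" and its reverse must be proven carefully, since $\mathcal{P}_{n,0}$ is generated by preimages of \emph{all} $\ell_\beta$'s and not only those appearing at level $m_k$. One needs to track, column by column, exactly which preimages of $\ell_\beta$ appear among $\{T^{-s}\ell_\beta : 0\leq s\leq n-1\}$ and to verify that the cuts inside a single column of $\mathcal{Q}_{k+1}$ are produced already for $n\geq h_{\max}^{m_k}$, which is where the accelerated MMY structure (namely that each column's name appears in every MMY step by Proposition~\ref{rowy}'s underlying combinatorics) is used. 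Once this bookkeeping is in place, the two directions reduce to manipulations of \eqref{balancedtimes}, \eqref{pigeon} and \eqref{wys0}.
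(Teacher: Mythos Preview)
The paper does not give its own proof of this proposition; it is quoted from \cite{Hubert:2012fk} and \cite{MR3178778}. (The forward implication is, however, re-derived inside the proof of Proposition~\ref{as}, via tower arguments close in spirit to your sandwich.) So there is no in-paper argument to compare your proposal against; what remains is whether your sketch stands on its own.

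Your forward direction is essentially correct in outline and matches the standard route: the Rokhlin partitions $\mathcal{Q}_k$ control $\mathcal{P}_{n,0}$ from above and below, and under bounded type the inequalities \eqref{balancedtimes}, \eqref{pigeon}, \eqref{wys0} convert this into the desired $1/(cn)$ bound. The index mismatch you introduce (invoking ``$\mathcal{Q}_{k+1}$ refines $\mathcal{P}_{n,0}$'' when you only have $n<h_{\max}^{m_{k+1}}$, whereas your own sandwich requires $n\le h_{\min}^{m_{k+1}}$) is harmless under bounded type, since one more induction step fixes it; but you should say so.

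The genuine gap is in the converse. First, an unbounded entry of $B(m_{k_j},m_{k_j+1})$ does not by itself force either $h_\alpha^{m_{k_j+1}}/h_\beta^{m_{k_j+1}}$ or $|I_\gamma^{m_{k_j}}|/|I_\delta^{m_{k_j}}|$ to blow up: the matrix could have one huge column with the remaining columns moderate while the incoming height vector is already unbalanced in the opposite direction. Second, and more seriously, your claim that ``$\mathcal{P}_{n,0}$ contains an atom of length comparable to $\min_\alpha|I_\alpha^{m_{k_j+1}}|$'' at $n\asymp h_{\max}^{m_{k_j+1}}$ is exactly what needs proof and cannot be dismissed as the refinement being ``essentially sharp''. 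Concretely: for the endpoints of $I_\beta^{m_{k_j+1}}$ to appear among $\{T^{-s}\ell_\gamma:0\le s<n\}$ you need $n\gtrsim h_{\max}^{m_{k_j+1}}$, and then you must show $h_{\max}^{m_{k_j+1}}\cdot\min_\alpha|I_\alpha^{m_{k_j+1}}|\to 0$. Writing this product out via the cocycle yields a factor $h_{\max}^{m_{k_j}}/h_{\alpha}^{m_{k_j}}$ that you have no control over in the unbounded regime, so the argument as written does not close. The proofs in \cite{Hubert:2012fk,MR3178778} handle this direction by a different (and more delicate) mechanism; your contrapositive would need substantial additional input to work.
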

As an immediate consequence of the above result we obtain the following:
\begin{wn}\label{rowy1}
Let $T$ be an IET. If $T$ has balanced partition lengths then $T$ is of bounded type.
\end{wn}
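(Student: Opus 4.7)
The plan is essentially to observe that the balanced partition lengths hypothesis directly supplies the condition characterizing bounded type in Proposition~\ref{rowy}, so the corollary is immediate once one unwinds the definitions.

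More precisely, suppose $T$ has balanced partition lengths with constant $c>0$. Then by Definition~\ref{de:2}(ii), for every $n\in\N$ and every $0\leq j\leq n-1$ we have
$$
\frac{1}{cn} < \min \cP_{n,j} \leq \max \cP_{n,j} < \frac{c}{n}.
$$
Specializing this to $j=0$ gives
$$
\frac{1}{cn} < \min \cP_{n,0} \text{ for all }n\in\N,
$$
which is exactly the characterization of being of bounded type supplied by Proposition~\ref{rowy}. Applying that proposition concludes the argument.

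There is no real obstacle here: the only thing one needs to verify is that the $j=0$ slice of the balanced partition lengths condition matches the hypothesis of Proposition~\ref{rowy}, and both are stated in terms of the same partition $\cP_{n,0}$ determined by the orbit $\{T^{-k}\ell_\alpha : 0\leq k\leq n-1,\ \alpha\in\cA\}$. Thus the corollary is a one-line consequence of the previous proposition and the definition.
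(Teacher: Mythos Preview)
Your proposal is correct and matches the paper's approach exactly: the paper states Corollary~\ref{rowy1} as ``an immediate consequence'' of Proposition~\ref{rowy}, and your argument spells out that immediacy by specializing Definition~\ref{de:2}(ii) to $j=0$.
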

We will now prove the following strengthening of Proposition~\ref{rowy}, which is the converse of Corollary~\ref{rowy1}:
\begin{pr}\label{as} 
Let $T$ be an IET. Then $T$ is of bounded type if and only if $T$ has balanced partition lengths.
\end{pr}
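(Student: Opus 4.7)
The ``if'' direction is already Corollary~\ref{rowy1}. For the ``only if'' direction, assume $T$ is of bounded type with constant $C$; we must produce a single constant $c>0$ witnessing the balanced partition lengths property. The strategy is to compare $\cP_{n,j}$ and $\cP_{n,j}^\alpha$ to the Rokhlin tower partition $\xi^{m_k}:=\{T^i I_\alpha^{m_k}:\alpha\in\cA,\ 0\leq i<h_\alpha^{m_k}\}$ at the MMY level $k=k(n)$ chosen by $\min_\alpha h_\alpha^{m_k}\leq n<\min_\alpha h_\alpha^{m_{k+1}}$. By \eqref{balancedtimes}--\eqref{wys} and bounded type, the heights $h_\alpha^{m_k}$ are uniformly comparable to $n$, and by \eqref{pigeon} every atom of $\xi^{m_k}$ has length $\asymp 1/n$.

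Lower bounds first. For $j=0$, the bound $\min\cP_{n,0}\geq 1/(cn)$ is precisely Proposition~\ref{rowy}. For $j\in[0,n-1]$ the breakpoints of $\cP_{n,j}$ form a subset of $\{T^l\ell_\alpha:|l|\leq n-1,\ \alpha\in\cA\}$; splitting this range into a forward and a backward half, I apply Proposition~\ref{rowy} to $T$ (backward half) and to $T^{-1}$ (forward half), using that bounded type is preserved under inverting the IET---this follows from the symmetry of the Rauzy--Veech algorithm under exchanging $\pi_0\leftrightarrow\pi_1$. Combined, this gives $\min \cP_{n,j}\geq 1/(cn)$; the corresponding lower bound for $\cP_{n,j}^\alpha$ is then automatic since $\cP_{n,j}$ refines $\cP_{n,j}^\alpha$, whence $\min\cP_{n,j}^\alpha\geq\min\cP_{n,j}$.

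For the upper bounds I compare $\cP_{n,j}$ to the tower partition $\xi^{m_k}$. The breakpoints of $\xi^{m_k}$ have the form $T^i\ell_\alpha^{m_k}$ with $0\leq i\leq h_\alpha^{m_k}$, and tracing the Rauzy--Veech algorithm back to the original data shows that $\ell_\alpha^{m_k}=T^{p_\alpha}\ell_{\beta_\alpha}$ for some $|p_\alpha|=O(n)$: each MMY step replaces a breakpoint by a $T^{\pm 1}$-iterate of another, and bounded type ensures the total number of such steps affecting any given breakpoint is $O(n)$. Hence, after enlarging to a partition $\cP_{N,J}$ with $N,J=O(n)$ that refines both $\cP_{n,j}$ and $\xi^{m_k}$, one gets $\max\cP_{n,j}\leq c\max\xi^{m_k}\leq c'/n$. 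The upper bound for $\cP_{n,j}^\alpha$ is more delicate---it concerns a single orbit---and requires a quantitative Denjoy--Koksma estimate: over $n$ consecutive iterates the orbit of a single discontinuity $\ell_\alpha$ visits a uniform fraction of tower columns, which, combined with the separation from Proposition~\ref{rowy}, prevents any gap larger than $c/n$.

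The principal technical hurdle is the identification of MMY breakpoints $\ell_\alpha^{m_k}$ with bounded-time orbit iterates $T^{p_\alpha}\ell_{\beta_\alpha}$, and the concomitant tower--orbit refinement comparison used for the upper bounds; this requires a careful induction along the Rauzy--Veech steps, with bounded type controlling the total depth. A secondary subtlety is verifying that bounded type passes to $T^{-1}$, which underpins the $j\neq 0$ case of the lower bound.
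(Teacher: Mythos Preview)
Your lower-bound argument for $\min\cP_{n,j}$ has a genuine gap. You propose to split the breakpoint set $\{T^l\ell_\alpha:|l|\leq n-1,\ \alpha\in\cA\}$ into a backward half (controlled by Proposition~\ref{rowy} for $T$) and a forward half (controlled by Proposition~\ref{rowy} for $T^{-1}$). But knowing $\min\cP_{n,0}\geq 1/(cn)$ for both $T$ and $T^{-1}$ only tells you that any two \emph{backward} iterates are well-separated and any two \emph{forward} iterates are well-separated; it says nothing about the distance between a backward iterate $T^{-a}\ell_\alpha$ and a forward iterate $T^b\ell_\beta$. The minimum gap of the union of two breakpoint sets is not bounded below by the minimum of the two individual gaps, and by the Keane condition no forward iterate $T^b\ell_\beta$ with $b>0$ ever belongs to the breakpoint set of $\cP_{N,0}$ for any $N$, so there is no cheap inclusion argument to fall back on.

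The paper closes this differently: instead of splitting, it shows directly that every point $T^l\ell_\alpha$ with $|l|\leq\min_\beta h_\beta^{m_k}$ is a left endpoint of some floor of the Rokhlin towers at a shifted level $m_{k+r}$ (for a fixed $r$ depending only on the bounded-type constant). The key observation is that neither the forward nor the backward $T$-orbit of $\ell_\alpha$, of length $\min_\beta h_\beta^{m_k}$, enters the small interval $I^{m_{k+r}}$; this uses that the first visit of the backward orbit to $I^{m_k}$ lands on a nonzero discontinuity $\ell_\gamma^{m_k}$ of the induced IET, which by the choice of $r$ lies to the right of $I^{m_{k+r}}$. Once all breakpoints are identified as floor endpoints at level $m_{k+r}$, the lower bound is immediate from $\min_\alpha|I_\alpha^{m_{k+r}}|\geq \mathrm{const}/n$. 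Your upper-bound sketch (the orbit visits every tower column, via positivity of $B(m_k,m_{k+r})$) is essentially the paper's argument and is fine in outline. Your claim that bounded type passes to $T^{-1}$ via the $(\pi_0,\pi_1)\leftrightarrow(\pi_1,\pi_0)$ symmetry of Rauzy--Veech is in fact correct, but needs a line of justification; the paper derives it instead as a corollary of the present proposition (Remark~\ref{inwer}), so you cannot invoke that remark here.
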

\begin{proof}
Assume $T$ is of bounded type. It follows (see~\cite{MR3178778}) that for $\tilde{r}=\max(2d-3,2)$ all entries of the matrix $B(m_k,m_{k+\tilde{r}})$ are positive for all $k\in\N$. Let $r'\in \N$ be such that for every $k\in \N$, 
\begin{equation}\label{mina}\min_{\alpha\in \cA}|I_\alpha^{m_k}|\geq |I^{m_{k+r'}}|
\end{equation}
and let $r=\max(\tilde{r},r')$. We will show that there exists $D>0$ such that for every $n\in\N$ and $0\leq j\leq n-1$
\begin{equation}\label{em}
\frac{1}{Dn}\leq \min\cP_{n,j}.
\end{equation}
We claim that is suffices to show that for some $D>0$
\begin{equation}\label{hmk}
\frac{dC^2}{D\min_{\alpha\in\cA}h_\alpha^{m_k}}\leq \min\cP_{\min_{\alpha\in\cA}h_\alpha^{m_k},j}
\end{equation}
for every $k\in \N$ and $0\leq j\leq \min_{\alpha\in \cA}h_\alpha^{m_k}-1$. Indeed, assume that~\eqref{hmk} holds, fix $n\in \N$ and let $k\in \N$ be such that 
$$
\min_{\alpha\in\cA}h_\alpha^{m_k}\leq n<\min_{\alpha\in\cA}h_\alpha^{m_{k+1}}.
$$
It follows from~\eqref{hmk} and~\eqref{wys0} that, for every $0\leq j\leq n-1<\min_{\alpha\in\cA}h_\alpha^{m_{k+1}}-1$, we have
\begin{multline*}
\min\cP_{n,j}\geq\min\cP_{\min_{\alpha\in\cA}h_\alpha^{m_{k+1}},j}\\
 \geq  
 \frac{dC^2}{D\min_{\alpha\in\cA}h_\alpha^{m_{k+1}}}\geq \frac{dC^2}{dC^2D\min_{\alpha\in\cA}h_\alpha^{m_{k}}}\geq \frac{1}{Dn}.
\end{multline*}

We will now show that \eqref{hmk} indeed holds. Fix $k\in\N$. We claim that for every $\alpha\in \cA\setminus\{T^{-1}(\pi_0^{-1})(1)\}$,
\begin{equation}\label{orb1} 
\{l_\alpha,\ldots, T^{\min_{\alpha\in\cA}h_\alpha^{m_k}}l_\alpha\}\cap I^{m_{k+r}}=\emptyset, 
\end{equation}
and for $\alpha\in \cA\setminus\{\pi_0^{-1}(1)\}$,
\begin{equation}\label{orb2} \{T^{-\min_{\alpha\in\cA}h_\alpha^{m_k}}l_\alpha,\ldots, l_\alpha\}\cap I^{m_{k+r}}=\emptyset.
\end{equation}
Fix $\alpha \in \cA\setminus\{T^{-1}(\pi_0^{-1})(1)\}$. It follows by~\eqref{mina} that there exists $\beta\in\cA$ such that $I^{m_k}_\beta\cap I^{m_{k+r}}=\emptyset$. Moreover, since $l_\alpha$ is the left end of some level of some tower for $\mathcal{R}^{m_{k+r}}(T)$, the forward orbit of $l_\alpha$ visits $I^{m_k}_\beta$ before it gets to $I^{m_{k+r}}$.  Therefore $T^il_\alpha\notin I^{m_{k+r}}$ for $i=0,\ldots,\min_{\alpha\in\cA}h_\alpha^{m_k}$. This gives us~\eqref{orb1}. To show~\eqref{orb2}, we proceed similarly. Namely, for  $\alpha \in \cA\setminus\{\pi_0^{-1}(1)\}$, consider the backward orbit of $l_\alpha$, and let $i_\alpha\in \N$ be the smallest integer such that, $T^{-i_\alpha}l_{\alpha}\in I^{m_k}$ (this is some discontinuity point of $R^{m_k}(T)$). Hence, using again \eqref{mina}, there exists $\beta\in\cA$ such that $$T^{-i_\alpha}l_{\alpha}\in I^{m_k}_\beta\text{ and }I^{m_k}_\beta\cap I^{m_{k+r}}=\emptyset.$$ 
Therefore, for $\ell\in \{0,...,\min_{\alpha\in\cA}h^{m_k}_\alpha\}$, $T^{-\ell}l_\alpha \notin I^{m_{k+r}}$, which yields~\eqref{orb2}. Clearly, 
\begin{align*}
\cP_{\min_{\alpha\in\cA}h_\alpha^{m_k},j}=&\bigcup_{\alpha\in\cA}
\cP^\alpha_{\min_{\alpha\in\cA}h_\alpha^{m_k},j}\\
\subset&\bigcup_{\alpha\in\cA\setminus T^{-1}\pi_0^{-1}(1)} \{l_\alpha,\ldots, T^{\min_{\alpha\in\cA}h_\alpha^{m_k}}l_\alpha\}\\
&\cup\bigcup_{\alpha\in\cA\setminus \pi_0^{-1}(1)} \{T^{-\min_{\alpha\in\cA}h_\alpha^{m_k}}l_\alpha,\ldots, l_\alpha\}\\
=&\bigcup_{\alpha\in\cA} \{T^{-\min_{\alpha\in\cA}h_\alpha^{m_k}}l_\alpha,\ldots,T^{\min_{\alpha\in\cA}h_\alpha^{m_k}}l_\alpha\}.
\end{align*}
Moreover, it follows from~\eqref{orb1} and~\eqref{orb2} that all points from the set 
$$
\bigcup_{\alpha\in\cA} \{T^{-\min_{\alpha\in\cA}h_\alpha^{m_k}}l_\alpha,\ldots,T^{\min_{\alpha\in\cA}h_\alpha^{m_k}}l_\alpha\}
$$
are left ends of some levels of towers for $\mathcal{R}^{m_{k+r}}(T)$. Therefore, by~\eqref{wys0} and by~\eqref{pigeon}, we obtain
\begin{align*}
\min\cP_{\min_{\alpha\in\cA}h_\alpha^{m_k},j}&\geq \min_{\alpha\in\cA} |I_\alpha^{m_{k+r}}|=|I_{\beta}^{m_{k+r}}|\\
&\geq \frac{1}{dC^2h_{\beta}^{m_{k+r}}}\geq\frac{1}{(dC^2)^{r+1}\min_{\alpha\in\cA}h_\alpha^{m_k}},
\end{align*}
where $\beta\in\cA$ is chosen so that the equality in the first line of the above expression holds, so it suffices to take $D=(dC^2)^{r+2}$ to get~\eqref{hmk}.
Now, we will prove that there exists $ D'>0$ such that for every $\alpha\in \cA$, every $n\in \N$ and $0\leq j\leq n-1$ 
\begin{equation}\label{inam} 
 \max \cP^\alpha_{n,j}\leq \frac{ D'}{n}.
\end{equation}
We claim that it suffices to show that there exists $D'>0$ such that
\begin{equation}\label{orb3a}
 \max\mathcal{P}(\{T^i x : 0\leq i\leq 2\max_{\alpha\in\cA}h_\alpha^{m_{k+r}}-1 \})\leq \frac{D'}{2dC^2 \max_{\alpha\in\cA}h_\alpha^{m_{k+r}}}
\end{equation}
holds for every $x\in [0,1)$ and $k\in\N$. Indeed, notice first that~\eqref{orb3a} means, in particular, that
\begin{equation}\label{orb3} 
\max \cP^\alpha_{2\max_{\alpha\in \cA} h_\alpha^{m_{k}},j}\leq \frac{D'}{2dC^2\max_{\alpha\in \cA}h_\alpha^{m_{k}}}
\end{equation}
holds for every $\alpha\in \cA$, $k\geq r+1$ and every $j\in \Z$. Next, fix $n\geq 2\max_{\alpha\in \cA}h_\alpha^{m_{r+1}}$, and let $k\geq r+1$ be unique such that 
$$
2\max_{\alpha\in \cA}h_\alpha^{m_{k}}\leq n<2\max_{\alpha\in \cA}h_\alpha^{m_{k+1}}.
$$
Then, by \eqref{orb3} (which holds for every $j\in \Z$) and~\eqref{wys0}, we obtain
\begin{multline*}
\max \cP^\alpha_{n,j}\leq \max \cP^\alpha_{2\max_{\alpha\in \cA}h_\alpha^{m_{k}},j}\\
  \leq \frac{D'}{2dC^2\max_{\alpha\in \cA}h_\alpha^{m_{k}}}\leq \frac{D'}{2\max_{\alpha\in \cA}h_\alpha^{m_{k+1}}}\leq\frac{D'}{n}.
\end{multline*}
Thus, we have shown that~\eqref{inam} holds for some $D'>0$ for $n\geq 2\max_{\alpha\in\cA}h_\alpha^{m_{r+1}}$. Adjusting $D'$ if necessary, we indeed obtain~\eqref{inam} (notice that the lef hand side of~\eqref{inam} takes only finitely many values when $k\leq r$). We will now prove~\eqref{orb3a}. Fix $x\in [0,1)$ and $k\in \N$. Notice that there exist $0\leq i_1<i_2\leq 2\max_{\alpha\in\cA}h_\alpha^{m_{k+r}}-1$ such that $T^{i_1}x,T^{i_2}x\in I^{m_{k+r}}$. Since all entries of $B(m_k,m_{k+r})$ are positive, it follows that for every $\alpha\in\cA$ there exists $i_1\leq i_\alpha<i_2$ such that $T^{i_\alpha}x\in I_\alpha^{m_k}$. In fact, the forward orbit of $T^{i_1}x$ of length $i_2-i_1$ visits every floor of each tower for $\mathcal{R}^{m_k}(T)$ at least once. Hence
\begin{multline*}
\max\mathcal{P}(\{T^i x : 0\leq i\leq 2\max_{\alpha\in\cA}h_\alpha^{m_{k+r}}-1 \})\leq 2\max_{\alpha\in\cA}|I_\alpha^{m_k}|\\
\leq\frac{2C}{\max_{\alpha\in\cA}h_\alpha^{m_k}}\leq \frac{2(dC^2)^rC}{\max_{\alpha\in\cA}h_\alpha^{m_{k+r}}}.
\end{multline*}
To obtain~\eqref{orb3a}, it suffices to take $D'=2(dC^2)^{r+1}C$.

The proof is now complete in view of~\eqref{em} and~\eqref{inam}, as clearly
$$
 \min\cP_{\min_{\alpha\in\cA}h_\alpha^{m_k},j}\leq \min\cP^\alpha_{\min_{\alpha\in\cA}h_\alpha^{m_k},j}
 $$
 and $\max \cP_{n,j}\leq \max \cP^\alpha_{n,j}$.
\end{proof}
\begin{uw}
Notice that the above proof yields, in particular, the following: whenever $T$ is an IET of bounded type then there exists a constant $c>0$ such that for any $n\in \N$ and any $x\in [0,1)$, for 
$$
\mathcal{P}_n(x):=\mathcal{P}(\{T^kx : 0\leq k\leq n-1\}),
$$
we have $\max \mathcal{P}_n(x) \leq \frac{c}{n}$. It turns out that also the following inequality is true: $\min\mathcal{P}_n(x) \geq \frac{1}{cn}$ for some constant $c>0$, all $x\in [0,1)$ and $n\geq 1$. Indeed, it was shown in~\cite{MR961737} that for any $x\in [0,1)$, $n\in \N$, we have either $T^nx=x$ or $|T^nx -x|\geq \min \mathcal{P}_{n+1,0}$. This, together with Proposition~\ref{rowy}, gives us the desired bound.
\end{uw}
\begin{uw}\label{inwer}
If follows immediately by Definition~\ref{de:2} that $T$ has balanced partition lengths if and only if $T^{-1}$ has this property. Therefore, in view of Proposition~\ref{as}, we obtain that $T$ is of bounded type if and only if $T^{-1}$ is of bounded type.
\end{uw}

\section{Special flow representation}\label{se:dach}
We will consider special flows with the base automorphism $T:\T\to \T$ being a minimal IET of bounded type and the roof function with symmetric logarithmic singularities at a subset of $\Sigma_{T}$. More precisely, let the functions $u,v:\R\to \R_+$  be given by
$$
u(x)=\frac{1}{x},\ v(x)=\frac{1}{1-x}\text{ for }x\in(0,1)
$$
and extended to $\R\setminus\Z$ in such a way that they are periodic of period $1$, i.e. for $x\in\R\setminus\Z$, $u(x) = u(\{x\})$ and $v(x) = v(\{x\})$, where $\{t\}$ denotes the fractional part of $t$. For $\alpha\in\cA$ let
$$
u_\alpha(x)=u(x-\ell_\alpha) \text{ and }v_{\alpha}(x)=v(x-r_\alpha).
$$
The roof function $f$ is such that its derivative is given by
\begin{equation}\label{roo}
f'(x)=-\sum_{\alpha\in\cA}C_\alpha^+u_\alpha(x)+\sum_{\alpha\in\cA}C_{\alpha}^-v_{\alpha}(x)+g(x),
\end{equation}
where the constants $C_\alpha^+,C_\alpha^-\geq 0$ are such that 
\begin{equation*}
\sum_{\alpha\in\cA}C_\alpha^+=\sum_{\alpha}C_\alpha^->0
\end{equation*}
and $g$ is a function of bounded variation and of class $C^2$ after restriction to 
$$
S:=\T\setminus(\{\ell_\alpha : C_\alpha^+> 0, \alpha\in\cA\}\cup \{r_\alpha: C_\alpha^->0, \alpha\in\cA\}).
$$
Thus the roof function $f$ is also of class $C^2$ after restriction to $S$. If the above conditions hold, we will say that~\emph{$f$ has symmetric logarithmic singularities at a subset of the discontinuities of $T$}.

\section{SWR-property}\label{se:swr}
In this section the central notion will be the \emph{SWR-property} introduced in~\cite{FKa}. Let $(X,d)$ be a $\sigma$-compact metric space, $\mathcal{B}$ the $\sigma$-algebra of Borel subsets of $X$, $\mu$ a Borel probability measure on $(X,d)$.  Let $\cT=(T_t)_{t\in\R}$ be an ergodic flow acting on $(X,\mathcal{B},\mu)$.
\begin{df}
Fix a compact set $P\subset \R\setminus\{0\}$ and $t_0>0$. The flow $\cT$ is said to have {\em $sR(t_0,P)$-property} if
\begin{align*}
&\text{for every $\vep>0$ and $N\in\N$ there exist $\kappa=\kappa(\vep)$, $\delta=\delta(\vep,N)$}\\
&\text{and a set $Z=Z(\vep,N)$ with $\mu(Z)>1-\vep$, such that}\\
&\text{for every $x,y\in Z$ with $d(x,y)<\delta$ and $x$ not in the orbit of $y$,}\\
&\text{there exist $M=M(x,y),L=L(x,y)\geq N$, $\frac{L}{M}\geq \kappa$}\\
&\text{and $p=p(x,y)\in P$}
\end{align*}
such that one of the following holds:
\begin{enumerate}[(i)]
\item
$\frac{1}{L}\left|\{n\in [M,M+L] : d(T_{nt_0}(x),T_{nt_0+p}(y))<\vep\}\right| >1-\vep$,
\item
$\frac{1}{L}\left|\{n\in [M,M+L] : d(T_{(-n)t_0}(x),T_{(-n)t_0+p}(y))<\vep\}\right| >1-\vep$.
\end{enumerate}
Moreover, $\cT$ has {\em SWR-property} (with the set $P$) if the set 
$$
\{t_0>0: \cT\text{ has }sR(t_0,P)\text{-property}\}
$$
is uncountable.
\end{df}
We will assume moreover that the flows under consideration are {\em almost continuous}. Recall that $\cT=(T_t)_{t\in\R}$ is said to be almost continuous if
\begin{align*}
&\text{for every $\vep>0$ there exists $X_\vep\subset X$ with $\mu(X_\vep)>1-\vep$}\\
&\text{such that for every $\vep'>0$ there exists $\delta'>0$ such that}\\
&\text{for every $x\in X$ we have $d(T_t(x),T_{t'}(x))<\vep'$ whenever $t,t'\in [-\delta',\delta']$}.
\end{align*}
\begin{tw}[\cite{FKa}]
Let $(X,d)$ be a $\sigma$-compact metric space, $\mathcal{B}$ the $\sigma$-algebra of Borel subsets of $X$, $\mu$ a Borel probability measure on $(X,d)$. Let $\cT$ be an almost continuous flow acting on $(X,\mathcal{B},\mu)$. If $\cT$ has SWR-property then $\cT$ enjoys FEJ-property.
\end{tw}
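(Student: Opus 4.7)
My plan follows the classical scheme, going back to Ratner and refined by Witte and by Fr\k{a}czek--Lema\'{n}czyk, for deducing FEJ-property from a Ratner-type hypothesis. Let $\cS=(S_t)_{t\in\R}$ be an ergodic flow on $(Y,\mathcal{C},\nu)$ and fix an ergodic joining $\rho\in J^e(\cT,\cS)$ with $\rho\neq\mu\otimes\nu$. Disintegrate $\rho$ over the $Y$-factor as $\rho=\int_Y(\mu_y\otimes\delta_y)\,d\nu(y)$; then $\pi$ being a finite extension is equivalent to $\mu_y$ being supported on finitely many points for $\nu$-a.e.\ $y$. Arguing by contradiction, assume $\operatorname{supp}\mu_y$ is infinite on a set of positive $\nu$-measure; since the function $y\mapsto|\operatorname{supp}\mu_y|\in\N\cup\{\infty\}$ is $\cS$-invariant by $\cT\times\cS$-invariance of $\rho$, ergodicity of $\cS$ forces it to equal $\infty$ for $\nu$-a.e.\ $y$.

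Fix a $t_0$ from the uncountable set for which $sR(t_0,P)$ holds and apply the property with a small $\vep>0$: let $Z=Z(\vep,N)$, $\kappa=\kappa(\vep)$, $\delta=\delta(\vep,N)$. Since $\mu(Z)>1-\vep$ and $\mu_y$ has infinite support, on a positive-$\nu$-measure set of $y$'s one can select distinct $x_1(y),x_2(y)\in Z\cap\operatorname{supp}\mu_y$ with $d(x_1,x_2)<\delta$, and since each $\cT$-orbit is countable while $\mu_y$ is infinite-supported, $x_1$ may be chosen outside the $\cT$-orbit of $x_2$. The $sR(t_0,P)$-property then provides $M(y),L(y)\geq N$ with $L/M\geq\kappa$, a drift $p(y)\in P$, and a direction (forward or backward) along which $T_{\pm nt_0}x_1$ shadows $T_{\pm nt_0+p}x_2$ to within $\vep$ for a $(1-\vep)$-fraction of $n\in[M,M+L]$. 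Passing to a subset of $y$'s one may assume the direction is uniform and, by compactness of $P$, that $p(y)$ lies in an arbitrarily small arc of $P$.

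I would then run a Birkhoff-type averaging to extract a ``drift joining''. Consider the fibre product $\rho\otimes_\nu\rho$ on $X\times X\times Y$, disintegrated as $\int_Y(\mu_y\otimes\mu_y\otimes\delta_y)\,d\nu(y)$, restricted to the off-orbit $\delta$-close pairs constructed above. For each such triple $(x_1,x_2,y)$ form the empirical measure
$$
\sigma_{x_1,x_2,y}=\frac{1}{L}\sum_{n=M}^{M+L}\delta_{(T_{nt_0}x_1,\,T_{nt_0+p}x_2,\,S_{nt_0}y)},
$$
which is $(1-\vep)$-concentrated in the $\vep$-neighbourhood of the ``graph'' $\{x'=T_p x\}$ in the first two coordinates. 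Integrating against the restricted fibre product, normalizing, and then letting $\vep\to 0$, $N\to\infty$ while using compactness of $P$ and of the space of joinings in the weak topology, one extracts a $\cT\times\cT\times\cS$-invariant probability measure $\widetilde\rho$ supported on $\{x'=T_{p^\ast}x\}$ for some $p^\ast\in P$, whose marginals on each $X\times Y$-factor coincide with $\rho$. Such a measure is the graph joining of $\rho$ with itself over $\cS$ induced by $T_{p^\ast}\times\operatorname{Id}$, which forces $(T_{p^\ast}\times\operatorname{Id})_\ast\rho=\rho$.

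The final step uses the uncountability of valid $t_0$'s. Running the construction for each such $t_0$ yields a drift $p^\ast(t_0)\in P\subset\R\setminus\{0\}$ lying in the closed stabilizer subgroup $H=\{t\in\R:(T_t\times\operatorname{Id})_\ast\rho=\rho\}$; since $P$ is compact and (after ruling out the degenerate case that $p^\ast$ is locally constant) the map $t_0\mapsto p^\ast(t_0)$ produces uncountably many values in $H\cap P$, $H$ contains an accumulation point, hence arbitrarily small non-zero elements, so $H=\R$. Thus $\rho$ is $(T_t\times\operatorname{Id})$-invariant for every $t\in\R$, which by ergodicity of $\cT$ forces $\mu_y=\mu$ for $\nu$-a.e.\ $y$ and hence $\rho=\mu\otimes\nu$, contradicting the choice of $\rho$. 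The main technical obstacle is the averaging step: one must measurably select $M(y),L(y),p(y)$ and the direction, control the integration across $y$ so as to produce a bona fide probability measure in the limit, and then carry out an ergodic-decomposition argument on $\widetilde\rho$ so that the stabilizer conclusion follows---this is where almost-continuity of $\cT$, the uniform quantifiers in the definition of $sR(t_0,P)$, compactness of $P$, and the switchable (directional) freedom all enter crucially.
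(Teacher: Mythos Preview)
The paper does not prove this theorem; it is quoted from~\cite{FKa}. Your sketch follows the right template (Ratner, Witte, Fr\k{a}czek--Lema\'nczyk): disintegrate $\rho$ over $Y$, pick close off-orbit fibre points, apply the drift, average, and extract a limit self-joining supported on a graph $\{x'=T_{p^\ast}x\}$, whence $p^\ast$ lies in the stabilizer $H=\{t:(T_t\times\mathrm{Id})_\ast\rho=\rho\}$. Two points need correction.

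First, a small slip: orbits of a flow are uncountable, so ``each $\cT$-orbit is countable'' is false. What you actually need is that for $\nu$-a.e.\ $y$ the measure $\mu_y$ is not carried by a single $\cT$-orbit; this follows because orbits are $\mu$-null and $(T_t)_\ast\mu_y=\mu_{S_ty}$ together with ergodicity of $\cS$ would otherwise force $\mu$ itself to sit on one orbit.

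Second, and more substantially, your final step misreads the role of the uncountable set of good $t_0$'s. Nothing prevents $p^\ast(t_0)$ from being the \emph{same} for every $t_0$, so the parenthetical ``after ruling out the degenerate case that $p^\ast$ is locally constant'' hides an unjustified (and in general false) claim; you cannot conclude $H=\R$ this way. The genuine purpose of uncountability is different: for an ergodic flow the set of $t$ with $T_t$ non-ergodic is countable, and likewise for $(T_t\times S_t,\rho)$; the hypothesis guarantees a $t_0$ that is simultaneously good for $sR(t_0,P)$ \emph{and} has $(T_{t_0}\times S_{t_0},\rho)$ ergodic. That ergodicity is exactly what makes your Birkhoff-type averaging over $[M,M+L]$ (with $L/M\ge\kappa$) converge and gives the limit $\widetilde\rho$ both marginals equal to $\rho$. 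Once you have a single $p^\ast\in H\cap P$, you do not need $H=\R$: from $(T_{p^\ast}\times\mathrm{Id})_\ast\rho=\rho$ one gets that each $\mu_y$ is $T_{p^\ast}$-invariant, and since $\mu=\int\mu_y\,d\nu$ is an extreme $T_{p^\ast}$-invariant measure whenever $T_{p^\ast}$ is ergodic, this forces $\mu_y=\mu$ a.e., i.e.\ $\rho=\mu\otimes\nu$ --- the desired contradiction. (In the applications $\cT$ is weakly mixing, so every $T_t$, $t\neq0$, is ergodic and this step is immediate; without weak mixing one has to be slightly more careful, which is handled in~\cite{FKa}.)
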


\subsection{SWR-property for special flows}
The following result giving a sufficient condition for SWR-property will be crucial for us.
\begin{pr}\label{cocy}
Let $(X,d)$ be a $\sigma$-compact metric space, $\mathcal{B}$ the $\sigma$-algebra of Borel subsets of $X$, $\mu$ a Borel probability measure on $(X,d)$. Let $T$ be an ergodic automorphism acting on $(X,\mathcal{B},\mu)$ and let $f\in L^1(X,\mathcal{B},\mu)$ be a positive function bounded away from zero. Let $\cT=(T_t^f)_{t\in\R}$ be the corresponding special flow.\footnote{Notice that such flows satisfy the almost continuity condition.} Let $P\subset \R\setminus\{0\}$ be a compact set. Assume that
\begin{align*}
&\text{for every $\vep>0$ and $N\in \N$ there exist $\kappa=\kappa(\vep)$, $\delta=\delta(\vep,N)$}\\
&\text{and a set $X'=X'(\vep,N)$ with $\mu(X')>1-\vep$, such that}\\
&\text{for every $x,y\in X'$ with $0<d(x,y)<\delta$}\\
&\text{there exist $M=M(x,y),L=L(x,y)\geq N$ with $\frac{L}{M}\geq \kappa$}\\
&\text{and $p=p(x,y)\in P$}
\end{align*}
such that one of the following holds:
\begin{enumerate}[(i)]
\item\label{posi}
$d(T^nx,T^ny)<\vep\;\text{and }\; |f^{(n)}(x)-f^{(n)}(y)-p|<\vep\text{ for every }n\in [M,M+L]$,
\item\label{nega}
$d(T^{-n}x,T^{-n}y)<\vep\;\text{and }\;|f^{(-n)}(x)-f^{(-n)}(y)-p|<\vep\text{ for every }n\in [M,M+L]$.
\end{enumerate}
If $\gamma>0$ is such that the automorphism $T^f_\gamma$ is ergodic, then $\cT$ has the $sR(\gamma,P)$-property.
\end{pr}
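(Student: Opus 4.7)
The plan is to translate the hypothesis on the base $T$ into the required condition for the flow $\cT$ at time $\gamma$. Given $\vep>0$ and $N\in\N$ from the SWR definition, I first select auxiliary parameters $\vep'>0$ small and $N'\in\N$ large (depending on $\vep$, $N$, $\gamma$, the essential infimum $c_0:=\operatorname{ess\,inf}f>0$, and $\bar{f}:=\int f\,d\mu$), and apply the hypothesis with $\vep',N'$ to obtain $\kappa'=\kappa'(\vep')$, $\delta'=\delta'(\vep',N')$, and $X'\subset X$ with $\mu(X')>1-\vep'$. By Egorov applied to Birkhoff's theorem for $f$ under $T$, I shrink $X'$ further so that $f^{(n)}(x)/n\to\bar{f}$ uniformly on $X'$ for $n\geq N'$. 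I then set
\[
Z:=\{(x,s)\in X^f : x\in X',\ \eta<s<f(x)-\eta\}
\]
for some small $\eta>0$, and put $\delta:=\min(\delta',\eta)$. Since $f\geq c_0>0$, choosing $\vep'$ and $\eta$ small enough ensures $\mu^f(Z)>1-\vep$.

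Take $(x,s),(y,s')\in Z$ with $d^f((x,s),(y,s'))<\delta$; then $d(x,y)<\delta'$ and $|s-s'|<\eta$. Applying the hypothesis to $x,y$ (in case (i); case (ii) is analogous) yields $M,L\geq N'$ with $L/M\geq\kappa'$ and some $p\in P$ such that $d(T^nx,T^ny)<\vep'$ and $|f^{(n)}(x)-f^{(n)}(y)-p|<\vep'$ for every $n\in[M,M+L]$. I translate the base-index window to a flow-time window by setting
\[
M':=\lceil (f^{(M)}(x)-s)/\gamma\rceil,\qquad M'+L':=\lfloor (f^{(M+L+1)}(x)-s)/\gamma\rfloor,
\]
so that for every $n\in[M',M'+L']$ the integer $k_x(n)$ defined by $f^{(k_x(n))}(x)\leq s+n\gamma<f^{(k_x(n)+1)}(x)$ lies in $[M,M+L]$. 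Uniform Birkhoff on $X'$ gives $M'\sim M\bar{f}/\gamma$ and $L'\sim L\bar{f}/\gamma$, so $L'/M'\geq\kappa$ for a suitable $\kappa$ depending on $\kappa'$, and $L'\geq N$ after enlarging $N'$.

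For each $n\in[M',M'+L']$, let $k:=k_x(n)\in[M,M+L]$ and let $k_y(n)$ be the analogous index for $(y,s')$ with shift $p$. I show that for at least a $(1-\vep)$-fraction of such $n$, one has $k_y(n)=k$ and the second coordinates of $T^f_{n\gamma}(x,s)$ and $T^f_{n\gamma+p}(y,s')$ differ by less than $\vep$. The failure of $k_y(n)=k$ occurs only when $s+n\gamma$ is within distance $O(\vep'+\eta)$ of an endpoint $f^{(k)}(x)$ or $f^{(k+1)}(x)$; since $f(T^kx)\geq c_0$, each level contains at least $c_0/\gamma$ admissible $n$, so the bad fraction is $O((\vep'+\eta)/c_0)$, made $<\vep/2$ by shrinking $\vep',\eta$. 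When $k_y(n)=k$, the first-coordinate distance is $d(T^kx,T^ky)<\vep'$, and a direct computation from $|f^{(k)}(x)-f^{(k)}(y)-p|<\vep'$ bounds the second-coordinate difference by $|s-s'|+\vep'<\eta+\vep'<\vep$ (up to a sign convention relating the hypothesis's $p$ to the $\pm p$ in the SWR conclusion, reconciled by replacing $p$ with $-p$ or by invoking case (ii)).

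The main obstacle I anticipate is the careful bookkeeping linking discrete base indices to continuous flow time: ensuring that the density of \emph{good} $n\in[M',M'+L']$ exceeds $1-\vep$ requires simultaneously the lower bound $f\geq c_0$ (to limit endpoint losses per level) and uniform Birkhoff behavior on $X'$ (to control how $[M',M'+L']$ relates to $[M,M+L]$), while the ergodicity of $T^f_\gamma$ is what makes $sR(\gamma,P)$ a nontrivial statement at this specific $\gamma$.
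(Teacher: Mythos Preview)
The paper itself does not give a detailed proof of this proposition; it simply records that the argument is identical to that of Proposition~3.3 in~\cite{FKa}, the only change being that the isometry assumption on $T$ is replaced by the explicit requirement $d(T^{\pm n}x,T^{\pm n}y)<\vep$ in the hypothesis. Your overall scheme---lift $X'$ to a good set $Z\subset X^f$, convert the base window $[M,M+L]$ into a flow-time window $[M',M'+L']$ via uniform Birkhoff for $f$, and then compare $T^f_{n\gamma}(x,s)$ with $T^f_{n\gamma+p}(y,s')$ level by level---is exactly the standard route taken in~\cite{FKa} and its predecessors.

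There is, however, a genuine gap in your counting of the ``bad'' $n$. You write that the failure of $k_y(n)=k$ occurs only when $s+n\gamma$ lies within $O(\vep'+\eta)$ of an endpoint $f^{(k)}(x)$ or $f^{(k+1)}(x)$, and then conclude that the bad fraction is $O((\vep'+\eta)/c_0)$. The first claim is correct, but the second does not follow: you are sampling the $t$-axis at spacing $\gamma$, so once $\vep'+\eta<\gamma$ each level contributes at most two bad $n$'s \emph{regardless of how small $\vep'$ and $\eta$ are}. With roughly $c_0/\gamma$ integers $n$ per level, the bad fraction per level is of order $\gamma/c_0$, which cannot be made smaller than $\vep$ by shrinking $\vep'$ or $\eta$. (The toy case $f\equiv 1$, $\gamma=1$ shows this starkly: the second coordinate of $T^f_{n\gamma}(x,s)$ is constant equal to $s$, so if $s<\eta$ then \emph{every} $n$ is bad.)

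This is precisely where the ergodicity of $T^f_\gamma$ is actually used, not merely as a nontriviality condition. One introduces the ``bad strip'' $B=\{(z,r)\in X^f: r<\eta'\text{ or }r>f(z)-\eta'\}$ with $\eta'=O(\vep'+\eta)$, so that $\mu^f(B)=2\eta'/\bar f$. By Birkhoff's theorem for the ergodic automorphism $T^f_\gamma$ applied to $\mathbb{1}_B$, together with Egorov, one finds a set of $(x,s)$ of measure $>1-\vep$ on which the proportion of $n\in[0,N]$ with $T^f_{n\gamma}(x,s)\in B$ is uniformly close to $\mu^f(B)$ for all large $N$. Combined with $L'/M'\geq\kappa$, this controls the proportion of bad $n$ in the window $[M',M'+L']$ by $O(\eta'/\bar f)$, which \emph{is} made small by shrinking $\vep',\eta$. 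You should incorporate this Egorov step (for $T^f_\gamma$, not just for $T$) into the definition of $Z$; your current $Z$ does not carry enough information.
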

The proof of Proposition~\ref{cocy} goes exactly by the same lines as the proof of Proposition 3.3 in~\cite{FKa}, where $T$ was assumed to be an ergodic isometry. We relax this assumption, requiring instead, in~\eqref{posi} and~\eqref{nega}, that $d(T^nx,T^ny)<\vep$ and $d(T^{-n}x,T^{-n}y)<\vep$, respectively. Notice that if $T$ is an isometry then these conditions are satisfied automatically provided that $\delta(\vep,N)<\vep$.

\section{Proof of Theorem~\ref{main}}\label{se:joker}
This section consists of two parts. First, in Section~\ref{idea}, we present the very general idea of the proof and the basic tools. The proof of Theorem~\ref{main} is included in Section~\ref{core-}: first we present the core of the proof, then the details are given.

\subsection{The idea of the proof}\label{idea}

Ratner's properties are based one two mechanisms: for two close points what we want to see is:
\begin{enumerate}[(A)]
	\item after some time we want their orbits to diverge by  $p\in P$, where $P$ is some fixed compact set ({\bf detecting the drift}),\label{A}
	\item we want them to stay $p$-drifted for an $\varepsilon$-proportion of time ({\bf keeping the drift}). \label{B}
\end{enumerate}
For smooth surface flows, divergence of orbits is produced by the singularities of the derivative. Therefore, to obtain \eqref{A} and \eqref{B}, we need a controlled way (determined by the set $P$) of approaching the singularities. A crucial observation is that once points get too close to a singularity, their distance ``explodes'', which we want to avoid, since set $P$ is compact. However, for IETs of bounded type, either looking forward or backward, the orbits do not get too close to the singularities. More precisely, we have the following lemma (the proof is included later):
\begin{lm}\label{dis}
Let $x\in [0,1)$. Then for any $\delta>0$, 
\begin{equation}\label{lm:1}
\#\left\{n\in\left[ -\frac{1}{8\delta c},\frac{1}{8\delta c}\right] : \min_{\alpha\in\cA}\|\ell_\alpha-T^nx\|<\delta\right\}\leq 1
\end{equation}
\end{lm}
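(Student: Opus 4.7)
The plan is to argue by contradiction, using Proposition~\ref{as} twice. If $\delta>1/(8c)$, the window $[-1/(8\delta c),1/(8\delta c)]$ contains only the integer $0$ and the claim is vacuous, so I assume $\delta\le 1/(8c)$ and that there exist $n_1<n_2$ in the window together with $\alpha_1,\alpha_2\in\cA$ satisfying $\|T^{n_i}x-\ell_{\alpha_i}\|<\delta$. Setting $m:=n_2-n_1$ (so $1\le m\le 1/(4\delta c)$) and $y_i:=T^{n_i}x$, the condition $4\delta c\le 1/2$ combined with Proposition~\ref{as} gives
$$
\min\cP_{m+1,j}\ge\frac{1}{c(m+1)}\ge\frac{4\delta}{1+4\delta c}>2\delta\qquad\text{for }j\in\{0,m\}.
$$

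The next step is to exploit the fact that $\cP_{m,m}=\cP(\{T^j\ell_\beta:1\le j\le m,\ \beta\in\cA\})$ is precisely the partition on whose atoms $T^{-m}$ acts as a translation. I would let $B$ denote the atom of $\cP_{m,m}$ containing $y_2$ and split into two cases according to whether $\ell_{\alpha_2}\in B$.

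If $\ell_{\alpha_2}\in B$, then $T^{-m}|_B$ is an isometry, so $\|T^{-m}\ell_{\alpha_2}-y_1\|=\|\ell_{\alpha_2}-y_2\|<\delta$, and the triangle inequality yields $\|T^{-m}\ell_{\alpha_2}-\ell_{\alpha_1}\|<2\delta$. Both points lie in $\cP_{m+1,0}$ and are distinct by the Keane condition (equality would force $\ell_{\alpha_2}=T^m\ell_{\alpha_1}$, violating the infinite-and-disjoint-orbits clause for $m\ge 1$), contradicting $\min\cP_{m+1,0}>2\delta$. If $\ell_{\alpha_2}\notin B$, then some boundary point $T^j\ell_\gamma$ of $\cP_{m,m}$ with $1\le j\le m$ lies strictly between $y_2$ and $\ell_{\alpha_2}$, giving $\|T^j\ell_\gamma-\ell_{\alpha_2}\|<\delta$; but $T^j\ell_\gamma$ and $\ell_{\alpha_2}$ are then distinct points of $\cP_{m+1,m}$, contradicting $\min\cP_{m+1,m}>\delta$.

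The main obstacle is pairing the correct dichotomy (provided by $\cP_{m,m}$, the partition making $T^{-m}$ continuous) with the right ambient partition from balanced-partition-lengths theory to produce a strict lower bound on atom sizes: $\cP_{m+1,0}$ closes the case $\ell_{\alpha_2}\in B$, while $\cP_{m+1,m}$ closes the complementary case, and the small-$\delta$ regime $\delta\le 1/(8c)$ is what makes the arithmetic $1/(c(m+1))>2\delta$ go through with room to spare. The remaining technicality concerns endpoints $\ell_\alpha$ with $\pi_0(\alpha)=1$ (giving $\ell_\alpha=0$), which are not covered by the Keane condition as stated, but these can be dealt with by invoking the minimality of $T$ (which holds since $T$ is of bounded type) to rule out the problematic orbital coincidences.
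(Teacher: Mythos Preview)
Your argument is essentially correct and reaches the same contradiction as the paper, but by a genuinely different route. The paper first reduces (without loss of generality) to \emph{consecutive} close-encounter times $n_1<n_2$, so that no $\ell_\alpha$ is met strictly between; it then pushes the short interval $[T^{n_1}x,\ell_{\beta_1}]$ \emph{forward} under $T,\dots,T^{n_2-n_1}$ (the consecutive hypothesis guarantees no discontinuity is crossed) and concludes that the two partition points $\ell_{\beta_2}$ and $T^{n_2-n_1}\ell_{\beta_1}$ are within $2\delta$, contradicting the balanced-partition-lengths lower bound for a single partition $\cP_{2(n_2-n_1),\,\cdot}$. Your approach instead works \emph{backward}: you look at the atom of $\cP_{m,m}$ containing $y_2=T^{n_2}x$ and split according to whether $\ell_{\alpha_2}$ lies in it, invoking two different partitions $\cP_{m+1,0}$ and $\cP_{m+1,m}$ in the two cases. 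The paper's reduction to consecutive times is what allows it to avoid your dichotomy; your version trades that reduction for a case analysis.

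One point where your closing remark does not quite close the gap: in Case~2 you claim a boundary point of $\cP_{m,m}$ lies \emph{strictly} between $y_2$ and $\ell_{\alpha_2}$, but this fails precisely when $\ell_{\alpha_2}$ is itself a boundary point of $\cP_{m,m}$ and coincides with the relevant endpoint of $B$. You invoke minimality to ``rule out the problematic orbital coincidences'', but minimality only excludes \emph{periodic} coincidences; the structural identity $T\ell_{\pi_1^{-1}(1)}=0=\ell_{\pi_0^{-1}(1)}$ is always present, so when $\ell_{\alpha_2}=0$ and $y_2$ lies in the last atom $[a_{\text{last}},1)$ of $\cP_{m,m}$, the only boundary point on the short arc from $y_2$ to $\ell_{\alpha_2}$ is $0$ itself, and your Case~2 argument gives no distinct pair. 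This edge case needs a direct treatment rather than a blanket appeal to minimality. (The paper's argument also tacitly assumes that $\ell_{\beta_2}\neq T^{n_2-n_1}\ell_{\beta_1}$, which runs into the same coincidence, so this is a shared subtlety rather than a defect peculiar to your approach.)
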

This will give~\eqref{A}. To obtain~\eqref{B}, we use again that the IETs under consideration are of bounded type -- this time the phenomenon we observe is that after the points reach the neighborhood of some singularity at time $M$, they stay ``far away'' from all singularities for time interval of length $\varepsilon M$ and we may use estimates from the following result:

\begin{pr}[Proposition 4.1 in~\cite{MR2800723}, see also Proposition 3.1 in~\cite{MR2992998}]\label{canc}
Let $\pi=(\pi_0,\pi_1)$ be an admissble pair of bijections from $\cA$ to $\{1,\dots,d\}$ and let $f$ be a function satisfying \eqref{roo}. Then for almost every $\lambda\in\R_+^\cA$, $|\lambda|=1$ there exists a constant $M'$ and a sequence of induction times $(n_k)_{k\in\N}$ for the corresponding IET $T_{\pi,\lambda}$ such that for every $z\in I_\beta^{n_k}$ and every $0<r\leq h_j^{n_k}$ we have
\begin{equation}\label{est}
\left|f'^{(r)}(z)+\sum_{\alpha\in\cA}\frac{C_\alpha^+}{z_\alpha^{\ell}}-\sum_{\alpha\in\cA}\frac{C_\alpha^-}{z_\alpha^{r}}\right|\leq M'r,
\end{equation}
where
$$
z_\alpha^\ell=\min_{0\leq i<r}|T^iz-\ell_\alpha|^+\text{ and }z_\alpha^r=\min_{0\leq i<r}|r_\alpha-T^iz|^+
$$
(for $x\in\R$, $|x|^+$ is equal to $x$ if $x\geq 0$ and it is equal to $\infty$ if $x<0$, so that $1/|x|^+$ is equal to zero for $x<0$).
\end{pr}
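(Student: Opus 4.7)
The plan is to decompose $f'$ according to~\eqref{roo} and bound the Birkhoff sum of each piece separately along the Rokhlin tower of the Rauzy--Veech induction at time $n_k$. Writing
$$
f'^{(r)}(z) \;=\; g^{(r)}(z) \;-\; \sum_{\alpha\in\cA} C_\alpha^+ \sum_{i=0}^{r-1} u_\alpha(T^iz) \;+\; \sum_{\alpha\in\cA} C_\alpha^- \sum_{i=0}^{r-1} v_\alpha(T^iz),
$$
the triangle inequality reduces the task to establishing, with constants uniform in $k$, $r$, and $z$, three bounds: $|g^{(r)}(z)|\leq M_0 r$, $\bigl|\sum_{i=0}^{r-1} u_\alpha(T^iz)-1/z_\alpha^\ell\bigr|\leq M_\alpha r$ for every $\alpha$, and the symmetric bound for $v_\alpha$. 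The first is immediate because $g$ is bounded on $\T$ (being of bounded variation), so one may take $M_0=\|g\|_\infty$.

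For the singular terms I would focus on $u_\alpha$; the case of $v_\alpha$ is entirely analogous. The essential geometric input is that for $z\in I_\beta^{n_k}$ and $r\leq h_\beta^{n_k}$, the orbit segment $T^0z,\ldots,T^{r-1}z$ consists of exactly one representative in each of $r$ distinct, pairwise disjoint floors of the Rokhlin tower over $I_\beta^{n_k}$, every such floor being an interval of length $|I_\beta^{n_k}|$. Let $i_0$ realise $z_\alpha^\ell=\min_{0\leq i<r}|T^iz-\ell_\alpha|^+$; then $u_\alpha(T^{i_0}z)=1/z_\alpha^\ell$ is the unique potentially divergent contribution and is peeled off explicitly, so the remainder $\sum_{i\neq i_0}u_\alpha(T^iz)$ is what has to be controlled by $M_\alpha r$. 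Floor disjointness yields that at most $\epsilon/|I_\beta^{n_k}|+1$ indices $i\neq i_0$ satisfy $\{T^iz-\ell_\alpha\}<\epsilon$, so the sorted residuals obey $\eta_{(j)}\geq (j-1)|I_\beta^{n_k}|$; together with the fact that $|I_\beta^{n_k}|$ is comparable to $1/h_\beta^{n_k}$ under the bounded type assumption (Proposition~\ref{as}), this sets up an elementary tail estimate on the remainder.

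The main obstacle, in my view, is sharpening this naive pigeonhole into the claimed linear-in-$r$ bound rather than the logarithmically larger estimate one reads off at face value. This is precisely where the a.e.\ hypothesis on $\lambda$ enters: one selects $(n_k)$ along a Rauzy--Veech / Zorich / MMY-accelerated subsequence so that, by the Oseledets theorem applied to the Kontsevich--Zorich cocycle, the orbit becomes uniformly distributed across the floors of the tower relative to every discontinuity $\ell_\alpha,r_\alpha$. Concretely, after subtracting the singular contribution on the floor adjacent to $\ell_\alpha$, the function $u_\alpha$ has bounded variation with $V(u_\alpha)$ controlled by $1/|I_\beta^{n_k}|$, and a Denjoy--Koksma cancellation along the full tower, combined with the chosen equidistribution, absorbs the otherwise $\log r$ factor. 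Summing the individual bounds with coefficients $C_\alpha^+, C_\alpha^-$ and adding the trivial bound on $g^{(r)}$ then yields the proposition with $M' = \|g\|_\infty + M\sum_\alpha(C_\alpha^+ + C_\alpha^-)$.
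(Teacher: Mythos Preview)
The paper does not give its own proof of this proposition; it is quoted from Ulcigrai~\cite{MR2800723} (see also~\cite{MR2992998}). Comparing your sketch to that argument, there is a genuine gap: the term-by-term bound you propose, namely $\bigl|\sum_{i<r}u_\alpha(T^iz)-1/z_\alpha^\ell\bigr|\leq M_\alpha r$ for each $\alpha$ separately, is simply false. After removing the closest-approach term, the remaining orbit points lie one per floor of the Rokhlin tower, and as you yourself observe the sorted residuals satisfy $\eta_{(j)}\gtrsim j\,|I_\beta^{n_k}|$; summing $1/\eta_{(j)}$ therefore produces a contribution of order $|I_\beta^{n_k}|^{-1}\log r\asymp h_\beta^{n_k}\log r$, not $O(r)$. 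No equidistribution input can remove this logarithm, since even a perfectly uniform orbit gives a harmonic sum. Your appeal to Denjoy--Koksma does not help either (there is no such inequality for a general IET), and the Kontsevich--Zorich cocycle controls deviations of ergodic sums of smooth observables, not pointwise sums of a singular integrand.

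What your outline omits entirely is the role of the \emph{symmetry} hypothesis $\sum_\alpha C_\alpha^+=\sum_\alpha C_\alpha^-$, which is the engine of Ulcigrai's proof. Each Birkhoff sum $\sum_i u_\alpha(T^iz)$, after extraction of its principal term, is compared with the integral $\int 1/x\,dx$ over the union of visited floors; this yields a main term $C_\alpha^+\log(1/|I_\beta^{n_k}|)+O(r)$, and the analogous computation for $v_\alpha$ gives $C_\alpha^-\log(1/|I_\beta^{n_k}|)+O(r)$ with the opposite sign in~\eqref{est}. Summing over $\alpha$, the logarithms cancel \emph{precisely} because of the symmetry condition, leaving the claimed $M'r$. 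Without this cancellation the estimate genuinely fails: in the asymmetric case the resulting $r\log r$ growth of $f'^{(r)}$ is exactly the mechanism producing mixing in the Kochergin and Sinai--Khanin flows, cf.\ the discussion in Section~\ref{se:1.1}.
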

\begin{uw}[cf. Remark 3.2 in~\cite{MR2992998}]\label{zamiast}
One can check that if $T$ is of bounded type, the estimate~\eqref{est} from the above proposition holds and, furthermore, one can take as $(n_k)_{k\in\N}$ the sequence $(m_k)_{k\in\N}$ associated with the Marmi-Moussa-Yoccoz acceleration of the Rauzy induction. 
\end{uw}

\subsection{Proof}\label{core-}
\subsubsection{Core of the proof}\label{core}
We will use Proposition~\ref{cocy}. Since $\cT$ is weakly mixing, this will be sufficient to show that $\cT$ has SWR-property. 
Let 
$$
\widetilde{C}:=\sum_{\alpha\in\cA}C_\alpha^++\sum_{\alpha\in\cA}C_{\alpha}^-.
$$
Without loss of generality we may assume that $C_{\pi_0^{-1}(1)}^+=1$. It follows by \eqref{roo}, that there exists $D>0$ such that for every $0<x<\frac{1}{2} \min \cP_{1,0}$ (see Definition \ref{de:2}) we have
\begin{equation}\label{bon}
|f'(x)+u_0(x)|<D
\end{equation}
(recall that $u_0(x)=\nicefrac1{\{x\}}$ for $x\not\in\Z$).
Let 
\begin{equation}\label{cP}
P:=\left[-H,-\frac{1}{1600c^4}\right]\cup\left[\frac{1}{1600c^4},H\right],
\end{equation}
where $c$ is as in Definition \ref{de:2} and 
$$
H:=2\left(\frac{M'C}{400c^4+1}+\widetilde{C}\right)\left(\frac{dC^3(400c^4+1)}{32c}+1\right),
$$
where $C$ is as in~\eqref{mmy} and $M'$ is as in Proposition~\ref{canc}.\footnote{We will tacitly use the inequalities from Definition~\ref{de:2} throughout the proof.}

Fix $\vep>0$ and $N\in\N$. Let
\begin{equation}\label{kap}
\kappa:=\min\left(\frac{16c}{C^3(400c^4+1)},\frac{8c\vep}{dC^2M'},\frac{2}{C^3(400c^4+1)},\frac{\vep}{2\widetilde{C}C} \right)
\end{equation}
and
\begin{equation}\label{e:delta}
\delta:=\min\left(\vep,\frac{1}{64c},\frac{1}{256c^3},\frac{\kappa}{512c^5N},\frac{1}{2c(400c^4+1)},\frac{3}{4000c^4D},\frac{\vep}{4D} \right).
\end{equation}
Let $X':=\T\setminus\bigcup_{n\in\Z, \alpha\in\cA}T^n{\ell_\alpha}$. Consider points $x,y\in X'$ with 
$$
\eta:=\|x-y\|\in(0,\delta).
$$
By Lemma~\ref{dis} (with $\delta=2\eta$), at least one of the following two conditions holds:
\begin{equation}\label{odl}
\text{ for every } n\in [0,\frac{1}{16\eta c}]\text{, }\min_{\alpha\in\cA} \| \ell_\alpha-T^{n}x\|\geq 2\eta,
\end{equation}

\begin{equation}\label{odl2}
\text{ for every } n\in [1,\frac{1}{16\eta c}]\text{, }\min_{\alpha\in\cA} \| \ell_\alpha-T^{-n}x\|\geq 2\eta.
\end{equation}
What remains to be checked is that the assumptions of Proposition~\ref{cocy} are indeed satisfied. Crucial here will be the following facts:
\begin{lm}\label{cond}
Assume \eqref{odl} holds. Then there exists $k\in \left[\frac{N}{\kappa},\frac{1}{32\eta c}\right]\cap \Z$ such that one of the following holds:
\begin{align}
&f^{(k+1)}(x)-f^{(k+1)}(y)\in P,\label{ka}\\
&f^{(k)}(x)-f^{(k)}(y)\in P.\label{ka2}
\end{align}
Moreover
$$
|f^{(n)}(T^Mx)-f^{(n)}(T^My)|<\vep\text{ for every }n\in [0,L]
$$
is true for:
\begin{align}
M:=k+1,\ L:=\lceil\kappa M\rceil &\text{ whenever}~\eqref{ka} \text{ holds},\label{div}\\
M:=\lfloor(1-\kappa)k\rfloor,\ L:=\lceil\kappa M\rceil &\text{ whenever}~\eqref{ka2} \text{ holds}.\label{div2}
\end{align}
\end{lm}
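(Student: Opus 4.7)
The plan is to track the ``Birkhoff difference'' $g(n) := f^{(n)}(x) - f^{(n)}(y)$ and identify a critical time $k$ at which $g$ enters $P$ in a manner compatible with the stability condition. Assumption \eqref{odl} forces both orbits to stay in the same continuity pieces of $T$ (and of $f$) for $n \in [0, 1/(16\eta c)]$, so the signed distance $T^n y - T^n x$ equals $y - x$ throughout and the interval $[T^n x, T^n y]$ is contained in a continuity interval of $f$. The fundamental theorem of calculus then gives $g(n) = -\int_x^y f'^{(n)}(t)\,dt$, and Proposition~\ref{canc} together with Remark~\ref{zamiast} supplies the estimate
\[
f'^{(n)}(t) = -\sum_{\alpha \in \cA} \frac{C_\alpha^+}{t_\alpha^\ell} + \sum_{\alpha \in \cA} \frac{C_\alpha^-}{t_\alpha^r} + O(M' n).
\]
Since $C_{\pi_0^{-1}(1)}^+ = 1$, the leading contribution to $g(n)$ is controlled by the closest approach of the orbit of $x$ to $\ell_{\pi_0^{-1}(1)}$.

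Next I would bound the single-step increments: $|g(n+1) - g(n)| = |f(T^n x) - f(T^n y)| \leq \eta \cdot \max_{[T^n x, T^n y]} |f'|$, which by \eqref{odl} and \eqref{bon} is at most $\widetilde C / 2 + \eta D$. By the choice of $\delta$ in \eqref{e:delta} this is far less than $1/(800c^4)$; thus a single step cannot move $g$ across the gap $(-1/(1600c^4),\, 1/(1600c^4))$ nor overshoot $\pm H$. A parallel use of the integral form plus Proposition~\ref{canc} yields $|g(n)| \leq \widetilde C / 2 + M' n \eta \leq H / 2$ for $n \leq 1/(32\eta c)$. Hence if $k_0$ is the first index in the admissible range $[N/\kappa,\, 1/(32\eta c)]$ for which $|g(k_0)| \geq 1/(1600c^4)$, automatically $g(k_0) \in P$. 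The lower bound $k_0 \geq N/\kappa$ is supplied by $\delta \leq \kappa/(512 c^5 N)$ in \eqref{e:delta}, which keeps $|g(n)| \leq \eta n \cdot \mathrm{const} < 1/(1600c^4)$ for $n < N/\kappa$.

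Existence of $k_0$ within the admissible range is the ``detecting the drift'' heart of the argument: by Proposition~\ref{as} and the remark following it, balanced partitions ensure that the orbit $\{x, Tx, \dots\}$ visits every neighborhood of $\ell_{\pi_0^{-1}(1)}$ of radius $c/n$ within $n$ steps while remaining pairwise $1/(cn)$-separated. Taking $n \sim 1/(32\eta c)$, this forces some iterate $T^i x$ with $|T^i x - \ell_{\pi_0^{-1}(1)}|$ of size $\Theta(\eta)$; at this iterate the jump $f(T^i x) - f(T^i y)$ is of order $\eta / |T^i x - \ell_{\pi_0^{-1}(1)}| = \Theta(1)$, well above $1/(1600c^4)$. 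Setting $k := k_0 - 1$ then yields case \eqref{ka}. For case \eqref{ka2}, I would instead identify the symmetric scenario in which $g$ has been inside $P$ throughout a window $[(1-\kappa)k,\, k]$ and leaves it at the subsequent step; here $k$ plays the role of the last index of a sustained stay in $P$, chosen so that the preceding window lies entirely within $P$.

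The most delicate point, and the main obstacle, will be verifying the stability bound $|g(M+n) - g(M)| < \vep$ for $n \in [0, L]$. By minimality of $k$ (or maximality, in case \eqref{ka2}) the orbit $T^M x,\, T^{M+1} x,\, \dots,\, T^{M+L} x$ over the relevant window cannot come as close as $\Theta(\eta)$ to any singularity: another close approach would have triggered an earlier entry of $g$ into $P$ (contradicting minimality in case \eqref{ka}) or an earlier exit from $P$ (contradicting maximality in case \eqref{ka2}). Quantitatively, one then obtains $\min_{0 \leq i \leq L} |T^{M+i} x - \ell_\alpha|^+ \geq \mathrm{const} \cdot \eta$; feeding this into Proposition~\ref{canc} applied at $T^M x$ with length $n \leq L = O(\kappa k)$ gives $|g(M+n) - g(M)| \leq \eta(\widetilde C / (\mathrm{const} \cdot \eta) + M' n)$, and the defining inequalities for $\kappa$ in \eqref{kap}, notably $\kappa \leq 8c\vep/(dC^2 M')$ and $\kappa \leq \vep/(2\widetilde C C)$, are tailored precisely to force this below $\vep$. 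Converting the ``minimality/maximality of $k$'' statement into the precise quantitative lower bound on orbit--singularity distances throughout the window is the core technical hurdle in the proof.
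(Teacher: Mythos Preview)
Your overall strategy---use Proposition~\ref{canc} to control $g(n)=f^{(n)}(x)-f^{(n)}(y)$, find a time at which the orbit comes close to the singularity at $0$, and deduce a large jump there---matches the paper's. However, two of the mechanisms you invoke do not work as stated.

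\textbf{First, the first-passage argument cannot guarantee $k_0\geq N/\kappa$.} Your claim that $|g(n)|\leq \eta n\cdot\mathrm{const}$ for $n<N/\kappa$ is not correct: under~\eqref{odl} the only lower bound available on orbit--singularity distances is $2\eta$, and this bound may already be (nearly) attained at $n=0$ or $n=1$. Feeding $z_\alpha^\ell\geq 2\eta$ into Proposition~\ref{canc} yields $|g(n)|\lesssim M'n\eta+\widetilde{C}$, and the $\widetilde{C}$ term does not shrink with $\eta$ or $n$; it may well exceed $1/(1600c^4)$. So $g$ can enter $P$ long before $N/\kappa$. The paper avoids this entirely: it does \emph{not} look for a first entry into $P$. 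Instead, Lemma~\ref{ek} constructs a specific $k\in[N/\kappa,1/(32\eta c)]$ with $T^kx\in(2\eta,400\eta c^4)$ purely from balanced-partition-length combinatorics (and the lower bound $k\geq N/\kappa$ comes from~\eqref{grow}, not from smallness of $g$). At this $k$ the single increment $|g(k+1)-g(k)|=\eta|f'(\xi)|\geq 1/(800c^4)$, forcing one of $|g(k)|,|g(k+1)|\geq 1/(1600c^4)$. No information about $g$ on $[0,k)$ is needed.

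\textbf{Second, the stability argument via ``minimality of $k$'' is circular.} In case~\eqref{ka} you set $M=k_0$ and need to rule out close approaches on $[M,M+L]$, i.e.\ \emph{after} $k_0$. A close approach at time $k_0+j$ with $j\geq 1$ would produce a large jump \emph{after} the first entry into $P$, which does not contradict minimality of $k_0$ at all. The paper's mechanism is different and robust: having pinned down that $T^{k}x$ (equivalently $T^{-1}\xi_n$) lies in $[0,\eta(400c^4+1)]$, it applies Lemma~\ref{dis} with $\delta=(8c\,h_{\pi_0^{-1}(1)}^{m_t})^{-1}$ to conclude that $n_0=0$ is the \emph{unique} time in the relevant range at which the orbit is within that $\delta$ of any singularity; hence for $n\in[1,L]$ the distances $\xi_{n,\alpha}^\ell,\xi_{n,\alpha}^r$ are bounded below by $(8c\,h_{\pi_0^{-1}(1)}^{m_t})^{-1}$, and Proposition~\ref{canc} then gives the $\vep$-bound directly. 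In short, it is Lemma~\ref{dis}---not any extremality property of $k$---that controls the stability window.

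A smaller point: Proposition~\ref{canc} only applies for $r\leq h_\beta^{n_k}$, so one cannot invoke it directly for all $n\leq 1/(32\eta c)$. The paper's proof of~\eqref{ha} decomposes the orbit into $S_0+1$ tower-return segments and applies Proposition~\ref{canc} on each; the bound~\eqref{18l} on $S_0$ is precisely the origin of the factor $\bigl(dC^3(400c^4+1)/(32c)+1\bigr)$ in $H$. Your one-shot application does not account for this.
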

\begin{uw}
Under the assumption that~\eqref{odl} holds, depending whether~\eqref{ka} or~\eqref{ka2} is true, we set $p:=f^{(k+1)}(x)-f^{(k+1)}(y)$ or $p:=f^{(k)}(x)-f^{(k)}(y)$, respectively. It follows by the cocycle equality that~\eqref{posi} from Proposition~\ref{cocy} holds.
\end{uw}

\begin{lm}\label{cond2}  
Assume \eqref{odl2} holds. Then there exists $k\in \left[\frac{N}{\kappa},\frac{1}{32\eta c}\right]\cap \Z$ such that one of the following holds:
\begin{align}
&f^{(-k)}(x)-f^{(-k)}(y)\in P,\label{ka3}\\
&f^{(-k+1)}(x)-f^{(-k+1)}(y)\in P.\label{ka4}
\end{align}
Moreover,
$$
|f^{(-n)}(T^{-M}x)-f^{(-n)}(T^{-M}y)|<\vep \text{ for every }n\in [0,L]
$$
is true for:
\begin{align}
M:=k,\ L:=\lceil\kappa M\rceil\text{ whenever}~\eqref{ka3}\text{ holds},\\
M:=\lfloor(1-\kappa)k\rfloor,\ L:=\lceil\kappa M\rceil\text{ whenever}~\eqref{ka4}\text{ holds}.
\end{align}
\end{lm}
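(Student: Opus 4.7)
My plan is to prove Lemma~\ref{cond2} by adapting the argument for Lemma~\ref{cond} to the backward direction, relying on the $T\leftrightarrow T^{-1}$ symmetry provided by Remark~\ref{inwer}. Since $T^{-1}$ is again of bounded type, the balanced-partition bounds of Proposition~\ref{as} and the cancellation estimate of Proposition~\ref{canc}, in the version of Remark~\ref{zamiast}, both remain available along backward orbits. The structure of the proof therefore runs in three steps that parallel the forward case.

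First I would locate the drift instant. By \eqref{odl2}, $\min_{\alpha\in\cA}\|\ell_\alpha-T^{-n}x\|\geq 2\eta$ for $n\in[1,1/(16\eta c)]$; because $\|x-y\|=\eta$ and the backward orbits of $x$ and $y$ share no discontinuity in this window, we also have $\|T^{-n}x-T^{-n}y\|=\eta$ and the orbit of $y$ stays $\eta$-far from the $\ell_\alpha$'s. The bounded-type hypothesis then forces the backward orbit of $x$ to approach some singularity of $f$ within the range $n\in[N/\kappa,1/(32\eta c)]$: I take $k$ to be the first such index at which $T^{-k}x$ enters a $c'\eta$-neighborhood of some $r_\alpha$ with $C_\alpha^->0$ (equivalently, $T^{-k+1}x$ lies near the corresponding $\ell_\alpha$), where $c'$ is an explicit constant depending only on $c,d,C$ from Section~\ref{se:ba}.

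Second, I compute the drift. Expanding
$$
f^{(-k)}(x)-f^{(-k)}(y)=-\sum_{j=1}^{k}\int_{T^{-j}y}^{T^{-j}x}f'(s)\,ds,
$$
the sum decomposes into the close-approach contribution (from the single index $j$ realising the minimum distance to a singularity) and the rest, which by Proposition~\ref{canc} applied along the backward orbit, combined with \eqref{wys0}--\eqref{pigeon} and \eqref{bon}, is bounded by a constant multiple of $M'k\eta$. By the choice \eqref{kap} of $\kappa$ and the bound $k\leq 1/(32\eta c)$, this remainder is less than $\vep$. The close-approach term is essentially $\widetilde C\log(|T^{-k+1}y-r_\alpha|/|T^{-k+1}x-r_\alpha|)$, and by the choice \eqref{cP} of $P$ together with the control on $\delta$ in \eqref{e:delta}, either this logarithmic contribution at time $k$ gives $f^{(-k)}(x)-f^{(-k)}(y)\in P$, confirming \eqref{ka3}, or, if the closest pass would push the drift past $H$, the drift already lies in $P$ one step earlier, confirming \eqref{ka4}.

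Finally, for the ``keeping the drift'' assertion, with the prescribed $M$ and $L=\lceil \kappa M\rceil$, the iterates $T^{-M-n}x$ for $n\in[0,L]$ lie entirely within the window guaranteed by \eqref{odl2}, and hence stay $2\eta$-far from every $\ell_\alpha$; in the case of \eqref{ka4}, the choice $M=\lfloor(1-\kappa)k\rfloor$ also keeps them away from the close pass at time $k$. Proposition~\ref{canc} then yields a linear-in-$L$ bound for $|f^{(-n)}(T^{-M}x)-f^{(-n)}(T^{-M}y)|$ of order $M'L\eta$, which by \eqref{kap} and \eqref{e:delta} is below $\vep$. The main obstacle, as in Lemma~\ref{cond}, is the delicate balance between the two alternatives \eqref{ka3} and \eqref{ka4}: a single backward iterate near a singularity can shift the drift by as much as $\widetilde C\log(1/\eta)$, so one must verify that the two choices of $k$ together cover all possible jump sizes of the drift across the close approach, ensuring the resulting $p=p(x,y)$ never falls into the gap $(-1/(1600c^4),1/(1600c^4))$ or leaves $[-H,H]$.
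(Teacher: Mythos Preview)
Your overall strategy is exactly what the paper intends: the authors state only that the proof of Lemma~\ref{cond2} ``goes along the same lines'' as that of Lemma~\ref{cond}, relying on the $T\leftrightarrow T^{-1}$ symmetry of Remark~\ref{inwer}, so the three-step outline (backward analog of Lemma~\ref{ek}; dichotomy \eqref{ka3}/\eqref{ka4}; controlled drift on $[M,M+L]$) is the right skeleton.

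There is, however, a quantitative slip in your Step~2 that would derail the argument as written. You claim that, after separating the single close-approach term, the remaining part of $f^{(-k)}(x)-f^{(-k)}(y)$ is ``a constant multiple of $M'k\eta$'' and hence ``less than $\vep$''. Since $k$ ranges up to $1/(32\eta c)$, the quantity $M'k\eta$ is of order $M'/(32c)$, a \emph{fixed} constant independent of $\vep$; no choice in \eqref{kap} makes it small. In the paper's forward proof this is precisely why the set $P$ in \eqref{cP} has outer radius $H$: the total drift $|f^{(k+1)}(x)-f^{(k+1)}(y)|$ is bounded only by $H/2$, not by $\vep$, and the dichotomy \eqref{a}/\eqref{b} comes from the \emph{jump} $|f(T^kx)-f(T^ky)|\geq 1/(800c^4)$ across the close pass (via \eqref{bon} and the mean value theorem), not from any smallness of the rest. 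Your logarithmic heuristic for the close-approach term is fine intuitively, but the argument you need is the one in \eqref{efk}--\eqref{wre}: a lower bound on the one-step jump, together with an upper bound of order $H$ on the whole Birkhoff sum.

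Two further points you should make explicit when writing the details. First, Proposition~\ref{canc} only applies to Birkhoff sums of length at most one tower height $h_\beta^{m_\ell}$, whereas $k$ may be larger; the paper handles this by cutting the orbit into $S_0+1\leq dC^3(400c^4+1)/(32c)+1$ full tower passages (see \eqref{18k}--\eqref{18l}) and summing the resulting estimates. Omitting this step leaves a genuine gap, since the $\widetilde{C}/\eta$ contribution appears once per piece. Second, merely taking ``the first index at which $T^{-k}x$ enters a $c'\eta$-neighborhood of a singularity'' does not by itself guarantee $k\geq N/\kappa$; you need the backward analog of Lemma~\ref{ek}, whose proof (Section~\ref{core2}) uses both sides of the balanced-partition estimate and is where the lower bound on $k$ is actually produced.
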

\begin{uw}
Under the assumption that~\eqref{odl2} holds, depending whether~\eqref{ka3} or~\eqref{ka4} is true, we set $p:=f^{(-k)}(x)-f^{(-k)}(y)$ or $p:=f^{(-k+1)}(x)-f^{(-k+1)}(y)$, respectively.  It follows by the cocycle equality that~\eqref{nega} from Proposition~\ref{cocy} holds.
\end{uw}
\begin{uw}
Note that the constants $M,L,p$ depend on which of the conditions~\eqref{ka},~\eqref{ka2},~\eqref{ka3},~\eqref{ka4} in Lemma~\ref{cond} and Lemma~\ref{cond2} is satisfied.
\end{uw}
\begin{uw}
The first part of Lemma~\ref{cond} and Lemma~\ref{cond2} is ``responsible'' for \emph{detecting the drift}, whereas the latter part of each of these two lemmas is ``responsible'' for \emph{keeping the drift}.
\end{uw}
To complete the proof of Theorem~\ref{main} we need to prove Lemma~\ref{dis}, Lemma~\ref{cond} and Lemma~\ref{cond2}. Since the proofs of the two latter lemmas go along the same lines, we will provide only the proof of Lemma~\ref{cond} only.

\subsubsection{Proof of Lemma~\ref{dis}}
Suppose that~\eqref{lm:1} does not hold.  Let 
$$
-\frac{1}{8\delta c}\leq n_1<n_2\leq\frac{1}{8\delta c}\text{ and }\beta_1,\beta_2\in\cA
$$
be such that
\begin{equation}\label{e:1}
\|\ell_{\beta_{i}}-T^{n_i}x\|<\delta\text{ for }i=1,2.
\end{equation}
Without loss of generality, we may assume that for $n_1<n<n_2$
\begin{equation}\label{a:1}
\min_{\alpha\in\cA}\|\ell_\alpha - T^nx \|\geq \delta.
\end{equation}
We may moreover assume that $T^{n_1}x\leq \ell_{\beta_{1}}$. It follows from~\eqref{a:1} that for $0\leq k\leq n_2-n_1$
$$
(T^{k}[T^{n_1}x,\ell_{\beta_1}])\cap \{\ell_\alpha:\alpha\in\cA\}=\emptyset,
$$
i.e.\ $T^k$ acts as a translation on $[T^{n_1}x,\ell_{\beta_1}]$ for $0\leq k\leq n_2-n_1$. Therefore and by~\eqref{e:1}, we have
\begin{equation}\label{e:2}
\|T^{n_2-n_1}\ell_{\beta_1}-T^{n_2}x\|=\|\ell_{\beta_1}-T^{n_1}x\|<\delta.
\end{equation}
It follows from~\eqref{e:1} and~\eqref{e:2} that $\|\ell_{\beta_2}-T^{n_2-n_1}\ell_{\beta_1}\|<2\delta$.
On the other hand,
$$
\|\ell_{\beta_2}-T^{n_2-n_1}\ell_{\beta_1}\|\geq \min\cP_{2(n_2-n_1),2(n_2-n_1)}\geq \frac{1}{2c(n_2-n_1)}\geq \frac{1}{2c\frac{1}{4\delta c}}=2\delta,
$$
which yields a contradiction and the result follows.

\subsubsection{Proof of Lemma~\ref{cond}}\label{core1}
We claim that one of the following holds:
\begin{enumerate}[(a)]
\item\label{a}
$|f^{(k+1)}(x)-f^{(k+1)}(y)|\geq \frac{1}{1600 c^4}$,
\item\label{b}
$|f^{(k)}(x)-f^{(k)}(y)|\geq \frac{1}{1600c^4}$.
\end{enumerate}
Moreover
\begin{equation}\label{ha}
|f^{(k+1)}(x)-f^{(k+1)}(y)|<\frac{H}{2}\text{ and } |f^{(k)}(x)-f^{(k)}(y)|<H.
\end{equation}
This, by definition of $P$, gives \eqref{ka} if~\eqref{a} holds, and \eqref{ka2} if~\eqref{b} holds. We will show later that~\eqref{ka} implies~\eqref{div} (the proof of the fact that~\eqref{ka2} implies~\eqref{div2} is analogous). We will need the following lemma (whose proof will be also given later):
\begin{lm}\label{ek} There exists $k\in \left[\frac{N}{\kappa},\frac{1}{32\eta c}\right]\cap \Z$ such that 
\begin{equation}\label{kal}
2\eta<T^kx<400\eta c^4.
\end{equation}
\end{lm}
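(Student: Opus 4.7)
The plan is to leverage the balanced partition lengths property of $T$, which holds by Proposition~\ref{as} since $T$ is of bounded type. The key tool is the first Remark following Proposition~\ref{as}: for every $z \in [0,1)$ and $n \in \N$, the partition $\mathcal{P}_n(z)$ of $[0,1)$ by the orbit segment $\{T^j z : 0 \le j \le n-1\}$ satisfies $\max \mathcal{P}_n(z) \le c/n$. Consequently, any interval $I \subset [0,1)$ with $|I| > c/n$ must contain at least one point of $\{T^j z : 0 \le j \le n-1\}$.

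I will apply this with $N_0 := \lceil N/\kappa \rceil$, $z := T^{N_0} x$, and $n := \lfloor 1/(32\eta c) \rfloor - N_0 + 1$. With these choices, the set $\{T^j z : 0 \le j \le n-1\}$ coincides with $\{T^k x : N_0 \le k \le \lfloor 1/(32\eta c) \rfloor\}$, and every such $k$ lies in $[N/\kappa, 1/(32\eta c)] \cap \Z$. The remaining task is to verify that $I := (2\eta, 400\eta c^4)$ satisfies $|I| > c/n$.

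From $\eta < \delta \le \kappa/(512 c^5 N)$ in~\eqref{e:delta} one deduces $N/\kappa \le (1/(16 c^4)) \cdot 1/(32\eta c)$, hence $n \ge (15/16)/(32\eta c) - 1$; combined with the bound $\eta < \delta \le 1/(256 c^3)$ (also from~\eqref{e:delta}), which absorbs the $-1$ via a mild weakening, one obtains $n \ge 15/(1024 c\eta)$, so that $c/n \le 1024 c^2 \eta/15 < 69 c^2 \eta$. Since $|I| = \eta(400 c^4 - 2)$ and $400 c^4 - 2 > 69 c^2$ for every $c \ge 1$ (indeed, $400 c^4 - 69 c^2 - 2 \ge 398 c^4 - 69 c^2 > 0$ for $c \ge 1$), we conclude $|I| > c/n$. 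By the Remark, $I$ therefore contains some iterate $T^k x$ with $k \in [N_0, \lfloor 1/(32\eta c) \rfloor] \subset [N/\kappa, 1/(32\eta c)] \cap \Z$, which is the required integer, since by construction $T^k x \in (2\eta, 400\eta c^4)$.

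The only real obstacle is the bookkeeping of numerical constants: one must trace how the various bounds on $\delta$ in~\eqref{e:delta} (in particular $\delta \le \kappa/(512 c^5 N)$ and $\delta \le 1/(256 c^3)$) interact with the constants $2$ and $400$ defining the target interval, to ensure that the $n$ iterates available after shifting by $N_0$ still produce a partition fine enough for $I$ to capture one of its points. No conceptual difficulty arises beyond the careful choice of constants already built into~\eqref{cP}--\eqref{e:delta}; once the numerical verification is done, existence of $k$ follows immediately from the partition bound.
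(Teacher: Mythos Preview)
Your argument is correct and takes a genuinely different, more economical route than the paper's. You exploit directly the Remark following Proposition~\ref{as}, namely the bound $\max\mathcal{P}_n(z)\le c/n$ for the orbit partition of an \emph{arbitrary} point $z$, applied to $z=T^{N_0}x$. Since the target window $(2\eta,400\eta c^4)$ has length exceeding $c/n$, some iterate $T^kx$ with $k\in[N_0,\lfloor 1/(32\eta c)\rfloor]$ must fall inside it; your numerical bookkeeping (using $\delta\le\kappa/(512c^5N)$ and $\delta\le1/(256c^3)$ from~\eqref{e:delta}) is accurate. One should note that the constant in the Remark is a priori a different ``$c$'' from that of Definition~\ref{de:2}; since both are fixed constants of $T$, one may harmlessly take the larger of the two at the outset, before $P,\kappa,\delta$ are chosen.

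The paper instead works only with partitions generated by orbits of the \emph{discontinuities} $\ell_\alpha$ (Definition~\ref{de:2} proper): it first locates a backward iterate $T^{-j_0}\ell_{\alpha_0}$ immediately to the left of $x$, then pushes forward to find $k_0$ with $T^{k_0-j_0}\ell_{\alpha_0}$ near $0$, and if $k_0<N/\kappa$ augments by a further $k_1$ chosen via $\mathcal{P}^{\pi_0^{-1}(1)}$; the lower bound $T^kx>2\eta$ is obtained separately from~\eqref{odl}. Your approach dispenses with~\eqref{odl} for this lemma and avoids the two-step construction, at the price of invoking the slightly stronger orbit-partition bound from the Remark (which the paper states but does not otherwise use in the proof of Theorem~\ref{main}). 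Both approaches are valid; yours is shorter, while the paper's stays strictly within the balanced-partition-lengths framework of Definition~\ref{de:2}.
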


\paragraph{Proof of the fact that either~\eqref{a} or~\eqref{b} holds.}
By \eqref{odl}, for every $\alpha\in\cA$, $\ell_\alpha\notin [T^nx,T^ny]$ (recall that $\|x-y\|=\eta$). In other words,
\begin{equation}\label{tran} T^n\text{ acts as a translation on }[x,y]\text{ for }n\in \left[0,\frac{1}{16\eta c}\right].
\end{equation} 
Let $k$ be as in~\eqref{kal}. Then, by~\eqref{tran},~\eqref{kal} and~\eqref{e:delta}, we have
 \begin{equation}\label{tky}
T^ky=T^kx+\eta\leq 400\eta c^4+\eta\leq \delta(400c^4+1)\leq\frac{1}{2c}\leq \frac{1}{2}\min \cP_{1,0}.
\end{equation}
Therefore and by \eqref{bon}, we obtain for some $\xi\in [T^kx,T^ky]$
\begin{multline}\label{fk}
\left|(f^{(k+1)}(x)-f^{(k+1)}(y))-(f^{(k)}(x)-f^{(k)}(y)) \right|=\left| f(T^{k}x)- f(T^{k}y) \right| \\
=\|x-y\||f'(\xi)|=\eta|f'(\xi)| \geq \eta(|u(\xi)|-D)=\eta\left(\frac{1}{\xi}-D\right).
\end{multline}
Since $\xi\leq T^ky\stackrel{\eqref{tky}}{\leq}\eta(400c^4+1)$, it follows that
\begin{multline}\label{efk} 
\left|(f^{(k+1)}(x)-f^{(k+1)}(y))-(f^{(k)}(x)-f^{(k)}(y)) \right|\\
\stackrel{\eqref{fk}}{\geq} \frac{1}{400c^4+1}-\eta D
\geq \frac{1}{400c^4+1}-\delta D\stackrel{\eqref{e:delta}}{\geq}\frac{1}{800c^4},
\end{multline}
whence indeed either~\eqref{a} or~\eqref{b} holds.
\paragraph{Proof of \eqref{ha}.}

Note first, that by the fact that $k\leq\frac{1}{\lfloor 32\eta c \rfloor}-1$ and by \eqref{tran}, there exists $z\in [x,y]$ such that
  \begin{equation}\label{18ha}
|f^{(k+1)}(x)-f^{(k+1)}(y)|=\|x-y\|\cdot |f'^{(k+1)}(z)|=\eta|f'^{(k+1)}(z)|.
\end{equation}
Moreover, by the choice of $k$, it follows from \eqref{tky} that \begin{equation}\label{18i}
T^kz\in[T^kx,T^ky]\subset [0,\eta(400c^4+1)].
\end{equation}
We will now apply Proposition~\ref{canc} and Remark~\ref{zamiast} for $T^{-1}$ (cf.\ Remark~\ref{inwer}). To make the text more readable, we will use the same notation as in Proposition~\ref{canc}, Remark~\ref{zamiast} and in Section~\ref{se:ba}, even though they are formulated for $T$, not $T^{-1}$. Let $\ell\in \N$ be unique such that 
\begin{equation}\label{18j}
\min_{\beta\in\cA}\lambda_\beta^{m_\ell}\leq  \eta(400c^4+1)<\min_{\beta\in\cA}\lambda_\beta^{m_{\ell+1}}.
\end{equation}
It follows from \eqref{18i} and \eqref{18j} that $T^kz\in I^{m_\ell}_{\pi_0^{-1}(1)}$. Let $\alpha_s\in\cA$, $s\geq 1$, be such that for every $S\geq 1$
$$
(T^{-1})^{(\sum_{s=0}^{S-1}h^{m_\ell}_{\alpha_s})}(T^kz)\in I^{m_\ell}_{\alpha_S}
$$
(we put $\alpha_0=\pi_0^{-1}(1)$). Let $S_0\geq 1$ be unique such that 
\begin{equation}\label{18k}
\sum_{s=0}^{S_0-1}h^{m_\ell}_{\alpha_s}< k\leq\sum_{s=0}^{S_0}h^{m_\ell}_{\alpha_s}.
\end{equation}
We get
\begin{multline*}
\frac{S_0}{\eta(400c^4+1)}\stackrel{\eqref{18j}}{\leq} \frac{S_0}{\min_{\beta\in\cA}\lambda_\beta^{m_\ell}}\stackrel{\eqref{pigeon}}{\leq} S_0 dC^2 \max_{\beta\in\cA}h_\beta^{m_\ell}\\
\stackrel{\eqref{balancedtimes}}{\leq} S_0 dC^3 \min_{\beta\in\cA}h_\beta^{m_\ell}\leq dC^3 \sum_{s=0}^{S_0-1}h_{\alpha_s}^{m_\ell}\stackrel{\eqref{18k}}{\leq} dC^3k\leq \frac{dC^3}{32\eta c},
\end{multline*}
where the last inequality follows by the definition of $k$. Therefore 
\begin{equation}\label{18l}
S_0\leq \frac{dC^3(400c^4+1)}{32c}.
\end{equation}
Now, we apply Proposition~\ref{canc} to $T^{-1}$, for every $0\leq s\leq S_0-1$ with 
$$
r_s:=h^{m_\ell}_{\alpha_s}\text{ and }z_s:=(T^{-1})^{\sum_{t=0}^{s-1}h^{m_\ell}_{\alpha_t}}(T^kz)\in I^{m_\ell}_{\alpha_s},
$$
and then also for 
$$
r_{S_0}:=k-(\sum_{t=0}^{S_0-1}h^{m_\ell}_{\alpha_t})<h^{m_\ell}_{\alpha_{S_0}}\text{ and }z_{S_0}:=(T^{-1})^{\sum_{t=0}^{S_0-1}h^{m_\ell}_{\alpha_t}}(T^kz)\in I^{m_\ell}_{\alpha_{S_0}}.
$$
We obtain
\begin{equation}\label{dw1}
\left|f_{T^{-1}}'^{(r_s)}(z_s)\right|\leq  M'r_s +\sum_{\alpha\in\cA}\frac{C_\alpha^+}{z_{s,\alpha}^\ell}+\sum_{\alpha\in\cA}\frac{C_\alpha^-}{z_{s,\alpha}^r},
\end{equation}
where
$$
z_{s,\alpha}^\ell=\min_{0\leq i<r_s}|(T^{-1})^{i}z_s-\ell_\alpha|^+\text{ and }z_{s,\alpha}^r=\min_{0\leq i<r_s}|r_\alpha-(T^{-1})^iz_s|^+.
$$
It follows from~\eqref{pigeon},~\eqref{balancedtimes} and~\eqref{18j} that for every $0\leq s\leq S_0$ we have
\begin{equation}\label{21a}
r_s\leq h_{\alpha_s}^{m_\ell}\leq\frac{1}{\lambda_{\alpha_s}^{m_\ell}}\leq \frac{C}{d\min_{\beta\in\cA }\lambda_\beta^{m_{\ell+1}}}\leq\frac{C}{d\eta (400c^4+1)}.
\end{equation}
Notice that each $T^{-i}z_s$ for $0\leq i<r_s$ is of the form $T^{n_i}z$ for some $0\leq n_i\leq \frac{1}{32\eta c}$ (recall that $z\in [x,y]$). 
Therefore, 
\begin{multline*}
|T^{n_i}z-\ell_\alpha|^+ \geq \|T^{n_i}z-\ell_\alpha\|\geq\|T^{n_i}x-\ell_\alpha\|-\|T^{n_i}x-T^{n_i}z\|\\
\stackrel{\eqref{tran}}{=}\|T^{n_i}x-\ell_\alpha\|-\|x-z\|
\stackrel{\eqref{odl}}{\geq} 2\eta-\eta=\eta,
\end{multline*}
i.e.\ 
\begin{equation}\label{zl}
z_{s,\alpha}^\ell=\min_{0\leq i<r_s}|T^{n_i}z-\ell_\alpha|^+ \geq \eta.
\end{equation}
Note that for any $w\in \T$, $\alpha\in \cA$, we have $|r_\alpha-w|^+\geq \|l_{\gamma}-w\|$, for $\gamma\in \cA$ such that $\pi_0(\alpha)+1=\pi_0(\gamma) \bmod d$. Thus, we also obtain $z_{s,\alpha}^r\geq \eta$.

By \eqref{dw1}, \eqref{21a} and by the definition of $H$,
\begin{equation}\label{osz}
\left|f_{T^{-1}}'^{(r_s)}(z_s)\right|\leq \frac{M'C}{d\eta(400c^4+1)}+\frac{\widetilde{C}}{\eta}= \frac{H}{2\eta}\left(\frac{dC^3(400c^4+1)}{32c}+1\right)^{-1}.
\end{equation}
Using inequalities~\eqref{osz} for $0\leq s\leq S_0$, the cocycle identity and~\eqref{18l}, we get
\begin{equation}\label{wre} 
|f'^{(k+1)}(z)|=|f_{T^{-1}}'^{(k+1)}(T^kz)|\leq\sum_{s=0}^{S_0}|f_{T^{-1}}'^{(r_s)}(z_s)|\leq \frac{H}{2\eta}.
\end{equation}
Hence, using (\ref{18ha}), we conclude that $|f^{(k+1)}(x)-f^{(k+1)}(y)|<\frac{H}{2}$. Moreover, 
\begin{multline*}
|f^{(k)}(x)-f^{(k)}(y)|\\
\leq|f^{(k+1)}(x)-f^{(k+1)}(y)|+|f(T^kx)-f(T^ky)|\leq \frac{H}{2}+\eta |f'(\theta_k)|,
\end{multline*}
for some $\theta_k\in [T^kx,T^ky]$. Since $\theta_k\geq T^kx\stackrel{(\ref{odl})}{\geq}2\eta$, it follows that 
$$
|f^{(k)}(x)-f^{(k)}(y)|\leq \frac{H}{2}+\frac{1}{2}<H.
$$  
This finishes the proof of \eqref{ha}.

From now on we will assume that~\eqref{a} holds, i.e.~\eqref{ka} is true (we will indicate the necessary modification needed in case when~\eqref{b} holds).

\paragraph{Proof of the fact that~\eqref{ka} implies~\eqref{div}.}
Suppose that~\eqref{ka} holds. Note that 
$$
0\leq M+L+1\leq (1+\kappa)M\leq 2\frac{1}{32\eta c}=\frac{1}{16\eta c},
$$
whence, in view of~\eqref{tran},
$$
\ell_\alpha\notin [T^{M+n}x,T^{M+n}y]\text{ for }\alpha\in\cA\text{ and }n\in [0,L-1],
$$
i.e. $f^{(n)}$ is differentiable on $[T^Mx,T^My]$. Therefore, for 
 $n\in[0,L]$, we have
\begin{equation}\label{20,5}
|f^{(n)}(T^Mx)-f^{(n)}(T^My)=\|x-y\||f'^{(n)}(\xi_n)|<\vep,
\end{equation}
for some $\xi_n\in [T^Mx,T^My]$.

Now, we proceed analogously to the proof of (\ref{ha}), i.e.\ we use again Proposition~\ref{canc} and Remark~\ref{zamiast}. This time, we will apply them to $T$. We will keep using the same notation as before, i.e.\ as it stands in Proposition~\ref{canc}, even though we do not work with $T^{-1}$ anymore.\footnote{If~\eqref{b} holds, we apply Proposition~\ref{canc} and Remark~\ref{zamiast} to $T^{-1}$.}

Let $t\in N$ be unique such that
 \begin{equation}\label{22}
 \min_{\beta\in\cA}h_\beta^{m_{t-1}}\leq L<\min_{\beta\in\cA}h_\beta^{m_t}.
 \end{equation}
For every $n\in[0,L]$, $\xi_n\in [T^{k+1}x,T^{k+1}y]$. Hence 
\begin{equation}\label{22a}
T^{-1}\xi_n\stackrel{\eqref{tran}}{\in}[T^kx,T^ky]\stackrel{\eqref{18i}}{\subset}[0,\eta(400c^4+1)]. 
\end{equation}
We will show that $[0,\eta(400c^4+1)]\subset I_{\pi_0^{-1}(1)}^{m_t}$. Indeed,
$$
|I_{\pi_0^{-1}(1)}^{m_t}|\stackrel{\eqref{pigeon}}{\geq}\frac{1}{dC^2}\cdot \frac{1}{h_{\pi_0^{-1}(1)}^{m_t}}\stackrel{\eqref{wys}}{\geq}\frac{1}{C^3}\cdot \frac{1}{\min_{\beta\in\cA}h_\beta^{m_{t-1}}}\stackrel{\eqref{22}}{\geq}\frac{1}{C^3L}
$$
and, on the other hand,
\begin{align}
\begin{split}\label{22b}
C^3L&=C^3\lceil \kappa(k+1)\rceil\leq C^3(\kappa(k+1)+1)\\
&\leq C^3\left(\frac{\kappa}{32\eta c}+1 \right)\stackrel{\eqref{e:delta}}{\leq}\frac{\kappa C^3}{16\eta c}\stackrel{\eqref{kap}}{\leq}\frac{1}{\eta(400c^4+1)}.
\end{split}
\end{align}
Therefore, using Proposition~\ref{canc} for $T^{-1}\xi_n\in I^{m_t}_{\pi_0^{-1}(1)}$, $0<n\leq h_{\pi_0^{-1}(1)}^{m_t}$, we obtain
\begin{equation}\label{22c}
\left|f'^{(n)}(T^{-1}\xi)-\sum_{\alpha\in\cA}\frac{C_\alpha^+}{\xi_{n,\alpha}^\ell}+\sum_{\alpha\in\cA}\frac{C_\alpha^-}{\xi_{n,\alpha}^r}\right|\leq M'n,
\end{equation}
where  
$$
\xi_{n,\alpha}^\ell=\min_{0\leq i<n}|T^i(T^{-1}\xi_n)-\ell_\alpha|^+\text{ and }\xi_{n,\alpha}^r=\min_{0\leq i<n}|r_\alpha-T^i(T^{-1}\xi_n)|^+
$$
for $\alpha\in\cA$. Fix $0<n\leq h_{\pi_0^{-1}(1)}^{m_t}$. Notice that
\begin{equation}\label{23z}
h_{\pi_0^{-1}(1)}^{m_t}\stackrel{\eqref{wys0}}{\leq} dC^2\min_{\beta\in\cA}h_\beta^{m_{t-1}}\stackrel{\eqref{22}}{\leq} dC^2L\stackrel{\eqref{22b}}{\leq}\frac{\kappa dC^2}{16\eta c}\stackrel{\eqref{kap}}{\leq}\frac{\vep}{2M'\eta}.
\end{equation}
Moreover, using Lemma~\ref{dis} for $\delta=(h_{\pi_0^{-1}(1)}^{m_t}8c)^{-1}$ and $x=T^{-1}\xi_n$, we obtain that there exists at most one $n_0\in [0,...,h_{\pi_0^{-1}(1)}^{m_t}]$ such that
\begin{equation}\label{23}
\min_{\alpha\in\cA}\|\ell_\alpha-T^{n_0}(T^{-1}\xi_n)\| <\frac{1}{h_{\pi_0^{-1}(1)}^{m_t}8c}. 
\end{equation}
Since
\begin{equation}\label{eq:JJJ}
\frac{1}{h_{\pi_0^{-1}(1)}^{m_t}8c}\stackrel{\eqref{23z}}{\geq}\frac{1}{8c}\frac{16\eta c}{\kappa dC^2}=\frac{2\eta}{\kappa dC^2}\stackrel{\eqref{kap}}{\geq}\eta(400c^4+1),
\end{equation}
it follows from~\eqref{22a} and~\eqref{23} that $n_0=0$.
Therefore,
$$
\text{for }n\in [0,L]\stackrel{\eqref{22}}{\subset} \left[0,\min_{\beta\in\cA}h_\beta^{m_t}\right]{\subset}[0,h_{\pi_0^{-1}(1)}^{m_t}]\text{ and }\alpha\in\cA\setminus\{\pi_0^{-1}(1)\},
$$
we have 
$$
\frac{1}{\xi_{n,\alpha}^\ell}\stackrel{(\ref{23})}{\leq}h_{\pi_0^{-1}(1)}^{m_t}8c\stackrel{(\ref{23z})}{\leq} \frac{\kappa dM^2}{2\eta}\stackrel{(\ref{kap})}{\leq} \frac{\vep}{4\widetilde{C}\eta}.
$$ 
In the same way,
$$
\frac{1}{\xi_{n,\alpha}^r}\leq \frac{\vep}{4\widetilde{C}\eta}\text{ for }\alpha\in\cA.
$$
Therefore, by the definition of $\widetilde{C}$, for every $n\in [0,h_{\pi_0^{-1}(1)}^{m_t})$, we have
\begin{equation}\label{25}\left|
\sum_{\alpha\in\cA}\frac{C_\alpha^+}{\xi_{n,\alpha}^\ell}+\sum_{\alpha\in\cA}\frac{C_\alpha^-}{\xi_{n,\alpha}^r}\right|<
\frac{\vep}{4\eta}.
\end{equation}
Hence, for every $n\in [0,h_{\pi_0^{-1}(1)}^{m_t})$, using  the fact that $\xi_{n,\pi_0^{-1}(1)}^\ell=T^{-1}\xi_n$, we have
\begin{align*}
|f'^{(n)}(\xi_n)|=&|f'^{(n+1)}(T^{-1}\xi_n)-f'(T^{-1}\xi_n)|\\
\leq& \left|f'^{(n+1)}(T^{-1}\xi_n)+\frac{1}{\xi_{n,\pi_0^{-1}(1)}^\ell}\right|+\left|-\frac{1}{\xi_{n,\pi_0^{-1}(1)}^\ell}-f'(T^{-1}\xi_n)\right|\\
\leq&\left|f'^{(n+1)}(T^{-1}\xi_n)+\sum_{\alpha\in\cA}\frac{C_\alpha^+}{\xi_{n,\alpha}^\ell}-\sum_{\alpha\in\cA}\frac{C_\alpha^-}{\xi_{n,\alpha}^r}\right|\\
&+\left|\sum_{\alpha\in\cA}\frac{C_\alpha^+}{\xi_{n,\alpha}^\ell}+\sum_{\alpha\in\cA}\frac{C_\alpha^-}{\xi_{n,\alpha}^r}\right|+\left|\frac{1}{T^{-1}\xi_n}+f'(T^{-1}\xi_n)\right|\\
&\!\!\!\!\!\!\!\!\!\!\!\!\!\!\! \stackrel{\eqref{22c},\eqref{25},\eqref{bon}}{\leq} M'(n+1)+\frac{\vep}{4\eta}+D\leq M'h_{\pi_0^{-1}(1)}^{m_t}+\frac{\vep}{4\eta}+D\\
\stackrel{(\ref{23z})}{\leq}&\frac{\vep}{2\eta}+\frac{\vep}{4\eta}+D\stackrel{(\ref{e:delta})}{\leq} \frac{\vep}{\eta}.
\end{align*}
This, together with (\ref{20,5}), completes the proof of \eqref{div}.

\subsubsection{Proof of Lemma~\ref{ek}}\label{core2}

Notice that we only need to prove the right inequality in \eqref{kal}, the left inequality follows immediately by \eqref{odl}. Note that for
$$
\mathcal{P}_{\lfloor \frac{1}{32\eta c}\rfloor,0}=\left\{T^{-k}\ell_\alpha\colon 0\leq k\leq \left\lfloor \frac{1}{32\eta c} \right\rfloor-1, \alpha\in\cA \right\},
$$
in view of~\eqref{e:delta}, we have
\begin{equation}\label{e:22}
\max(\mathcal{P}_{\lfloor \frac{1}{32\eta c}\rfloor,0})\leq \frac{c}{\lfloor\frac{1}{32\eta c}\rfloor}\leq \frac{c}{\frac{1}{32\eta c}-1}\leq \frac{c}{\frac{1}{64\eta c}}=64\eta c^2.
\end{equation}
Moreover, there exist $0\leq j_0\leq \lfloor \frac{1}{32\eta c} \rfloor-1$ and $\alpha_{0}$ such that
$$
(T^{-j_0}\ell_{\alpha_{0}},x]\cap \mathcal{P}_{\lfloor \frac{1}{32\eta c}\rfloor,0}=\emptyset.
$$
It follows that
\begin{equation}\label{e:4}
\text{$T^k$ act as a translation on $[T^{-j_0}\ell_{\alpha_0},x]$ for }0\leq k\leq \left\lfloor\frac{1}{32\eta c} \right\rfloor
\end{equation}
and, by~\eqref{e:22},
\begin{equation}\label{e:4a}
|T^{-j_0}\ell_{\alpha_0}-x|\leq \max(\mathcal{P}_{\lfloor \frac{1}{32\eta c}\rfloor,0})\leq 64\eta c^2.
\end{equation}
Notice also that for
$$
\cP^{\alpha_0}_{\lfloor\frac{1}{32\eta c} \rfloor,\lfloor\frac{1}{32\eta c}\rfloor- j_0-1}=\left\{T^{k-j_0}\ell_{\alpha_0} : 0\leq k\leq \left\lfloor\frac{1}{32\eta c} \right\rfloor-1\right\},
$$
 we have
$$
\max \cP^{\alpha_0}_{\lfloor\frac{1}{32\eta c} \rfloor,\lfloor\frac{1}{32\eta c} \rfloor-j_0-1}\leq \frac{c}{\lfloor\frac{1}{32\eta c}\rfloor}\leq 64\eta c^2
$$
(see~\eqref{e:22}), 
whence there exists $0\leq k_0\leq \lfloor\frac{1}{32\eta c}\rfloor-1 $ such that
\begin{equation}\label{e:5}
0<T^{k_0-j_0}\ell_{\alpha_0}\leq 64\eta c^2.
\end{equation}
If $k_0\geq {N}/{\kappa}$ then we set $k:=k_0$. Suppose now that $k_0<{N}/{\kappa}$. Since
$$
\min \cP_{\lfloor \frac{1}{128\eta c^3}\rfloor,\lfloor \frac{1}{128\eta c^3}\rfloor-1}\geq \frac{1}{c\lfloor\frac{1}{128\eta c^3} \rfloor}\geq\frac{1}{c\frac{1}{128\eta c^3}}=128\eta c^2>64\eta c^2,
$$
it follows from~\eqref{e:5} that 
\begin{equation}\label{agt}T^n\text{ acts on }[0,T^{k_0-l_0}\ell_{\alpha_0}]\text{ as a translation for }0\leq n\leq \lfloor\frac{1}{128\eta c^3}\rfloor.
\end{equation}
 Moreover, since for
$$
\cP^{\pi_0^{-1}(1)}_{\lfloor\frac{1}{128\eta c^3} \rfloor,\lfloor \frac{1}{128\eta c^3} \rfloor-1}=\left\{T^k0 : 0\leq k\leq \left\lfloor \frac{1}{128\eta c^3}\right\rfloor -1\right\}
$$
we have
$$
\max \cP^{\pi_0^{-1}(1)}_{\lfloor\frac{1}{128\eta c^3} \rfloor,\lfloor \frac{1}{128\eta c^3} \rfloor-1}\leq \frac{c}{\lfloor \frac{1}{128\eta c^3}\rfloor},
$$
we can find $0<k_1\leq \lfloor \frac{1}{128\eta c^3}\rfloor-1$ such that
\begin{equation}\label{b:1}
0<T^{k_1}0<\frac{c}{\lfloor \frac{1}{128\eta c^3} \rfloor}\leq \frac{c}{\frac{1}{128\eta c^3}-1}\stackrel{\eqref{e:delta}}{\leq} 256 \eta c^4.
\end{equation}
Using
$$
\{T^{k_1}0,0\}\subset \cP^{\pi_0^{-1}(1)}_{k_1+1,k_1}=\{T^n0 :0 \leq n\leq k_1\},
$$
we obtain
$$
T^{k_1}0\geq\min \cP^{\pi_0^{-1}(1)}_{k_1+1,k_1}\geq \frac{1}{c(k_1+1)}\geq\frac{1}{2ck_1}.
$$
This, together with~\eqref{b:1} and \eqref{e:delta}, implies that
\begin{equation}\label{grow}
k_1\geq \frac{1}{512\eta c^5}\geq\frac{1}{512\delta c^5}\geq \frac{N}{\kappa}.
\end{equation}
Moreover, using~\eqref{e:5}, \eqref{agt} and~\eqref{b:1}, we obtain
\begin{align}\label{ali:1}
T^{k_1+k_0-j_0}\ell_{\alpha_0}&\leq |T^{k_1+k_0-j_0}\ell_{\alpha_0}-T^{k_1}0|+T^{k_1}0\nonumber\\
&=T^{k_0-j_0}\ell_{\alpha_0}+T^{k_1}0\\
&<64\eta c^2+256\eta c^4.\nonumber
\end{align}
Notice that by the choice of $k_0$ and $k_1$, we have
\begin{align}\label{ali:2}
\begin{split}
k:=k_0+k_1&\leq \frac{N}{\kappa}+\left\lfloor \frac{1}{128\eta c^3} \right\rfloor-1\\
&\stackrel{\eqref{grow}}{\leq} \frac{1}{512\eta c^5}+\frac{1}{128\eta c^3}-1\stackrel{\eqref{e:delta}}{\leq} \left\lfloor\frac{1}{32\eta c} \right\rfloor-1.
\end{split}
\end{align}
Thus, using~\eqref{grow} and~\eqref{ali:2}, by \eqref{ali:1}, we have shown that there exists $k\in\N$
\begin{equation}\label{img}
\frac{N}{\kappa}\leq k\leq \left\lfloor \frac{1}{32\eta c}\right\rfloor-1\text{ and }
0<T^{k-j_0}\ell_{\alpha_0}\leq 64\eta c^2 +256\eta c^4.
\end{equation}
This, in view of~\eqref{e:4},~\eqref{e:4a} and~\eqref{img}, implies that
\begin{align}\label{ali:3}
\begin{split}
0<T^kx&\leq |T^kx-T^{k-j_0}\ell_{\alpha_0}|+T^{k-j_0}\ell_{\alpha_0}\\
&=|x-T^{-j_0}\ell_{\alpha_0}|+T^{k-j_0}\ell_{\alpha_0}\\
&\leq 64\eta c^2+64\eta c^2+256\eta c^4<400\eta c^4.
\end{split}
\end{align}
This finishes the proof of Lemma \ref{ek}, making also the proof of Theorem~\ref{main} complete.

\footnotesize

\def\cprime{$'$}

\bigskip
\footnotesize

\noindent
Adam Kanigowski\\
\textsc{Institute of Mathematics, Polish Acadamy of Sciences, \'{S}niadeckich 8, 00-956 Warszawa, Poland}\par\nopagebreak
\noindent
\textit{E-mail address:} \texttt{adkanigowski@gmail.com}

\medskip

\noindent
Joanna Ku\l aga-Przymus\\
\textsc{Institute of Mathematics, Polish Acadamy of Sciences, \'{S}niadeckich 8, 00-956 Warszawa, Poland}\\
\textsc{Faculty of Mathematics and Computer Science, Nicolaus Copernicus University, Chopina 12/18, 87-100 Toru\'{n}, Poland}\par\nopagebreak
\noindent
\textit{E-mail address:} \texttt{joanna.kulaga@gmail.com}

\end{document}